\def\sideset#1#2#3{%
  \@mathmeasure\z@\displaystyle{#3}%
  \global\setbox\@ne\vbox to\ht\z@{}\dp\@ne\dp\z@
  \setbox\tw@\box\@ne
  \@mathmeasure4\displaystyle{\copy\tw@#1}%
  \@mathmeasure6\displaystyle{#3{#2}}%
  \dimen@-\wd6 \advance\dimen@\wd4 \advance\dimen@\wd\z@
  \hbox to\dimen@{}\mathop{\kern-\dimen@\box4\box6}%
}
\newtheorem{theorem}{Theorem}
\newtheorem{proposition}[theorem]{Proposition}
\newtheorem{lemma}[theorem]{Lemma}
\newtheorem{corollary}[theorem]{Corollary}
\theoremstyle{remark}
\newtheorem{example}[theorem]{Example}
\newtheorem{remark}[theorem]{Remark}
\numberwithin{equation}{section}
\def\coef#1{\left\langle#1\right\rangle}
\def\fl#1{\left\lfloor#1\right\rfloor}
\def\lcm{\operatorname{lcm}}
\def\PSL{\operatorname{PSL}}
\newcommand{\Z}{\mathbb{Z}}
\newcommand{\R}{\mathbb{\R}}
\def\al{\alpha}
\def\be{\beta}
\def\ga{\gamma}
\begin{document}

\title[Free subgroup numbers modulo prime powers]{Free 
subgroup numbers modulo prime powers: the non-periodic case}  
\author[C. Krattenthaler and 
T.\,W. M\"uller]{C. Krattenthaler$^{\dagger}$ and
T. W. M\"uller$^*$} 

\address{$^{\dagger}$Fakult\"at f\"ur Mathematik, Universit\"at Wien, 
Oskar-Morgenstern-Platz~1, A-1090 Vienna, Austria.
WWW: {\tt http://www.mat.univie.ac.at/\lower0.5ex\hbox{\~{}}kratt}.}

\address{$^*$School of Mathematical Sciences, Queen Mary
\& Westfield College, University of London,
Mile End Road, London E1 4NS, United Kingdom.}

\thanks{$^\dagger$Research partially supported by the Austrian
Science Foundation FWF, grants Z130-N13 and S50-N15,
the latter in the framework of the Special Research Program
``Algorithmic and Enumerative Combinatorics"\newline\indent
$^*$Research supported by Lise Meitner Grant M1661-N25 of the Austrian
Science Foundation FWF}

\subjclass[2010]{Primary 05A15;
Secondary 05E99 11A07 20E06 20E07 68W30}

\keywords{virtually free groups, free subgroup numbers,
congruences, Gosper--Zeilberger algorithm}

\begin{abstract}
In [{\it J. Algebra} {\bf452} (2016), 372--389],
we characterise when the sequence of free subgroup numbers of a
finitely generated virtually free group $\Gamma$ 
is ultimately periodic modulo a given prime power. 
Here, we show that, in the remaining cases, in which the sequence of
free subgroup numbers is not ultimately periodic modulo a given
prime power, the number of free subgroups of index~$\lambda$ 
in $\Gamma$ is --- essentially --- 
congruent to a binomial coefficient times a rational function 
in $\lambda$ modulo a power of a prime that divides a certain invariant
of the group $\Gamma$, 
respectively to a binomial sum involving such numbers.
These results, apart from their intrinsic interest,
in particular allow for a much more efficient computation of
congruences for free subgroup numbers in these cases compared
to the direct recursive computation of these numbers implied
by the generating function results in [{\it J. London Math. Soc.}~(2)
\textbf{44} (1991), 75--94].
\end{abstract}

\maketitle

\section{Introduction}
\label{sec:intro}
For a finitely generated virtually free group $\Gamma$, denote by
$m_\Gamma$ the least common multiple of the orders of the finite
subgroups in $\Gamma$. Moreover, for a positive integer $\lambda$, let
$f_\lambda(\Gamma)$ be the number of free subgroups of index $\lambda
m_\Gamma$ in $\Gamma$. In \cite{KMPeriod}, a complete characterisation
is given of all pairs $(\Gamma, p^\alpha)$, where $\Gamma$ is a
finitely generated virtually free group and $p^\alpha$ is a proper
prime power, for which the sequence
$(f_\lambda(\Gamma))_{\lambda\geq1}$ is ultimately periodic modulo
$p^\alpha$. As it turns out, somewhat surprisingly, this is always the
case, unless $\mu_p(\Gamma) = 0$ and $\mu(\Gamma)\ge2$,\footnote{The
  condition $\mu(\Gamma)\ge2$ is equivalent to the assertion that
$\Gamma$ contains a non-Abelian free subgroup.} where $\mu_p(\Gamma)$
and $\mu(\Gamma)$ are certain invariants of
$\Gamma$  defined in Section~\ref{sec:back}. See also \cite{KM2},
where more precise results are obtained for lifts of the inhomogeneous
modular group.
Our present paper focuses on the latter case
of non-periodic behaviour. 
It is shown in \cite{MuPuFree} that, for $\mu_p(\Gamma)=0$ and
$\mu(\Gamma)\ge2$, the function $f_\lambda(\Gamma)$ satisfies the
congruence  
\begin{equation}
\label{Eq:fModp}
f_\lambda(\Gamma) \equiv (-1)^{\frac{(\mu(\Gamma)-1)\lambda+1}{p-1}}
\frac 1\lambda
\binom{\frac{\mu(\Gamma)\lambda}{p-1}}{\frac{\lambda-1}{p-1}}
\quad (\text{mod } p),
\end{equation}
where the binomial coefficient is defined to be zero whenever the
lower argument is not an integer;
cf.\ \cite[Eqn.~(35)]{MuPuFree}. 

The purpose of the present paper is to demonstrate that, under the same
assumptions, the function $f_\lambda(\Gamma)$ satisfies a
very similar congruence modulo an {\it arbitrary} $p$-power.
More precisely, if $\mu(\Gamma) \equiv 0,
1\pmod{p}$, the function $f_\lambda(\Gamma)$, when reduced modulo
any fixed $p$-power, is congruent to
a (quasi-)rational factor in $\lambda$ times a binomial
coefficient (see Corollary~\ref{cor:1} in Section~\ref{sec:main}), 
while in the remaining cases the right-hand side takes
the form of a sum of such expressions (see Corollary~\ref{cor:2} in
the same section). A remarkable consequence of these results is 
that, while it may be
safely conjectured that the generating function for the free subgroup 
numbers (which satisfies a highly non-linear differential equation
obtained from \eqref{Eq:G(z)Diff} via \eqref{Eq:GFTransform}) 
is not D-finite, implying that the sequence of free subgroup numbers
itself is not P-recursive,\footnote{The reader is referred to 
\cite[Ch.~6]{StanBI} for information
on D-finite series and P-recursive sequences.} 
its reduction modulo any fixed $p$-power is.

While the result obtained in Corollary~\ref{cor:1} for arbitrary $p$-powers
is `as good as' the mod~$p$ result \eqref{Eq:fModp},
we show in Proposition~\ref{prop:rek} 
that the sum described in Corollary~\ref{cor:2} 
satisfies an inhomogeneous linear recurrence 
of finite depth with 
constant coefficients and leading coefficient~1 
(which may be found automatically by means
of the Gosper--Zeilberger algorithm; cf.\ \cite{PeWZAA}). 
This leads again to an
efficient computation of $f_\lambda(\Gamma)$ modulo $p^\alpha$. 
All these results are presented in Section~\ref{sec:main}, and are illustrated
there by concrete examples.

 The only known earlier results concerning congruences of
free subgroup numbers modulo prime powers in the non-periodic case
covered the following scenarios: (i) lifts 
of Hecke groups $\mathfrak{H}(q) \cong C_2 \ast C_q$ with $q$ a Fermat
prime and $p=2$, and (ii) lifts of the classical modular group
$\mathfrak{H}(3)\cong \PSL_2(\Z)$ 
and $p=3$;  
see Section~8, Corollary~34 and Theorem~35 in
\cite{KKM}, and \cite[Sec.~16]{KM}. In particular, the behaviour of
$f_\lambda(\Gamma)$ modulo $p$-powers in these known cases fits into
the framework of the semi-automatic method for obtaining congruences
developed in \cite{KKM,KM,KM3}, 
and further (unpublished) work. As we show in this paper,
for finitely generated virtually free
groups $\Gamma$ and primes $p$ with $\mu_p(\Gamma) = 0$ and
$\mu(\Gamma)\ge2$, this is always the case. 

This semi-automatic method is based on a generating
function approach, featuring a basic series --- to be adapted
for each class of applications --- which is then used to express
the generating function for the sequence of numbers we have in mind,
reduced modulo a given $p$-power, as a polynomial in this
basic series. 
We show in Theorem~\ref{thm:1} in Section~\ref{sec:GF} 
that, if $\mu_p(\Gamma)=0$
and $\mu(\Gamma)\ge2$, we may choose the series
\begin{equation} \label{eq:Phi}
\Phi(z)=\sum_{n=1}^\infty
(-1)^{\frac{(\mu(\Gamma)-1)n+1}{p-1}}
\frac {1} {n}
\binom{\frac{\mu(\Gamma)n}{p-1}}{\frac{n-1}{p-1}}\,z^n
\end{equation}
as basic series 
(i.e., the series formed out of the coefficients
on the right-hand side of \eqref{Eq:fModp}) 
in order to express the generating function
$\sum_{\lambda=1}^\infty f_\lambda(\Gamma)\,z^\lambda$ 
modulo $p$-powers as a polynomial
in~$\Phi(z)$. 
Corollaries~\ref{cor:1} and \ref{cor:2} alluded to above are
consequences of Theorem~\ref{thm:1}.
The proof of the theorem requires some auxiliary
results which are presented in Section~\ref{sec:aux}.
These include some interesting determinant evaluations, see
Lemmas~\ref{lem:M} and \ref{lem:detA}.

A remarkable feature of the present application of our semi-automatic
generating function method is that the degree of the polynomial
in the basic series $\Phi(z)$ expressing the generating function
$\sum_{\lambda=1}^\infty f_\lambda(\Gamma)\,z^\lambda$ modulo~$p^\alpha$
does not increase with~$\alpha$. As a consequence, the complexity
of the computation only mildly increases with~$\alpha$.
This is in sharp contrast to our previous applications of this method. 
The reason for the above phenomenon lies in the fact that $\Phi(z)$
satisfies an {\it exact\/} functional equation over the integers, 
namely \eqref{eq:PhiGl}, while in our previous applications the
basic series satisfied functional equations 
modulo~$p^\alpha$ of complexity increasing with~$\alpha$.

Our
results concerning the function $f_\lambda(\Gamma)$ are complemented
by Theorem~\ref{thm:mup=0} in Section~\ref{sec:char}, 
which precisely characterises those
finitely generated virtually free groups $\Gamma$ with
$\mu(\Gamma)\ge2$ and $\mu_p(\Gamma)=0$ for a given prime number $p$. 

\section{Some preliminaries on finitely generated virtually free groups}
\label{sec:back}

Our notation and terminology concerning virtually free
groups and their decomposition in terms of a graph of groups follows
Serre's book \cite{Serre2}; 
in particular, the category of graphs used in the context of graphs
of groups is described in
\cite[\S2]{Serre2}. 
This category deviates slightly from the usual notions in graph
theory. In order to distinguish the objects of this category from
graphs in the sense of graph theory, we call them {\it $S$-graphs}.
Specifically, an {\it $S$-graph} $X$ consists of two sets:
$E(X)$, the set of (directed) {\it edges}, and $V(X)$, the set of
{\it vertices}. The set $E(X)$ is endowed with a fixed-point-free involution
${}^-: E(X) \rightarrow E(X)$ ({\it reversal of orientation}), and there are
two functions $o,t: E(X)\rightarrow V(X)$ assigning to an edge $e\in
E(X)$ its {\it origin} $o(e)$ and {\it terminus} $t(e)$, such that
$t(\bar{e}) = 
o(e)$. 
The reader should note that, according to the above
definition, $S$-graphs may have loops (that is, edges $e$ with
$o(e)=t(e)$) and multiple edges (that is, several edges with
the same origin and the same terminus).  
An {\it orientation} $\mathcal{O}(X)$ consists of a choice of exactly 
one edge in each pair $\{e, \bar{e}\}$ 
(this is indeed always a pair -- even for loops --
since, by definition, the involution
${}^-$ is fixed-point-free). Such a pair is called a
{\it geometric edge}. 

\medskip
Let $\Gamma$ be a finitely generated virtually free group with
Stallings decomposition\break
 $(\Gamma(-), X)$; that is, $(\Gamma(-), X)$ is
a finite graph of finite groups with fundamental group 
$\pi_1(\Gamma(-), X) \cong \Gamma$. Replacing the stabiliser groups
of vertices and edges by their respective group orders and 
replacing each pair $(e,\bar e)$ by one unoriented edge, we obtain
the corresponding {\it order graph} of~$\Gamma$. Abstractly, an {\it order graph} is
a finite connected unoriented graph (in the sense of graph theory;
multiple edges and loops are allowed) 
whose vertices~$v$ and edges~$e$ carry positive
integers, $n(v)$, respectively $n(e)$, as labels 
such that $n(e)\mid n(v)$ if $v$ is incident
to~$e$. The labels of vertices and edges will frequently be referred
to as their respective {\it order}.
 
As in the introduction, denote by $m_\Gamma$ the least common multiple
of the orders of the finite subgroups in $\Gamma$, so that, in
terms of the above Stallings decomposition of $\Gamma$,  
\begin{equation} \label{eq:mG} 
m_\Gamma = \mathrm{lcm}\big\{\vert\Gamma(v)\vert:\, v\in V(X)\big\}.
\end{equation}
(This formula essentially follows from the well-known fact that a
finite group has a fixed point when acting on a tree.)  
The \textit{type} $\tau(\Gamma)$ of a finitely generated virtually free group
$\Gamma \cong \pi_1(\Gamma(-), X)$ is defined as the  
tuple
$$
\tau(\Gamma) = \big(m_\Gamma; \zeta_1(\Gamma), \ldots,
\zeta_\kappa(\Gamma), \ldots, \zeta_{m_\Gamma}(\Gamma)\big), 
$$
where the  
$\zeta_\kappa(\Gamma)$'s are integers indexed by the divisors $\kappa$ of
$m_\Gamma$, given by 
\begin{equation} \label{eq:zeta} 
\zeta_\kappa(\Gamma) = 
\big\vert\big\{e\in \mathcal{O}(X):\, \vert\Gamma(e)\vert
\,\big\vert\, \kappa\big\}\big\vert\,-\, \big\vert\big\{v\in V(X):\,
\vert\Gamma(v)\vert \,\big\vert\, \kappa\big\}\big\vert. 
\end{equation}
(Here, $\mathcal{O}(X)$ is any orientation of the $S$-graph $X$.) It can
be shown that the type $\tau(\Gamma)$ is in fact an invariant 
of the group $\Gamma$, i.e., independent of the particular
decomposition of $\Gamma$ in terms of a graph of groups $(\Gamma(-),
X)$, and that two finitely generated virtually free groups $\Gamma_1$
and $\Gamma_2$ contain the same number of free subgroups of index $n$
for each positive integer $n$ if, and only if, $\tau(\Gamma_1) =
\tau(\Gamma_2)$; cf.\ \cite[Theorem~2]{MuDiss}. We have
$\zeta_\kappa(\Gamma)\geq0$ for $\kappa<m_\Gamma$ and
$\zeta_{m_\Gamma}(\Gamma)\geq-1$ with equality occurring in the latter
inequality if, and only if, $\Gamma$ is the fundamental group of a
tree of groups; cf.\ \cite[Prop.~1]{MuEJC} or \cite[Lemma~2]{MuDiss}. 

Inspection of \eqref{eq:mG} and \eqref{eq:zeta} reveals that all
ingredients of the type (that is, $m_\Gamma$ and the $\zeta_\kappa$'s)
depend only on the orders but not on the internal structure of the
stabilisers of vertices and edges of $(\Gamma(-),X)$.
Therefore it makes sense to attach the same invariants to the
order graph obtained from the graph of groups $(\Gamma(-),X)$
in the way described earlier, or, more generally, to an abstract
order graph. 
Specifically, given an order graph~$G$, we define
$m_G$ to be the least common multiple of the vertex orders $n(v)$,
taken over all vertices of~$G$, and, for a divisor $\kappa$ 
of $m_G$, we let 
\begin{equation} \label{eq:zetaG} 
\zeta_\kappa(G) = 
\big\vert\big\{e\in E(G):\,n(e)
\mid \kappa\big\}\big\vert\,-\, \big\vert\big\{v\in V(G):\,
n(v) \mid \kappa\big\}\big\vert,
\end{equation}
where $V(G)$ denotes the set of vertices of $G$ and $E(G)$ the
set of edges. 

\medskip
Define a
\textit{torsion-free $\Gamma$-action} on a set $\Omega$ to be a
$\Gamma$-action on $\Omega$ which is free when restricted to finite
subgroups, and let 
\begin{equation} \label{eq:gla} 
g_\lambda(\Gamma):= \frac{\mbox{number of torsion-free
$\Gamma$-actions on a set with $\lambda m_\Gamma$ elements}}{(\lambda
m_\Gamma)!},\quad \lambda\geq0; 
\end{equation}
in particular, $g_0(\Gamma)=1$. The sequences
$\big(f_\lambda(\Gamma)\big)_{\lambda\geq1}$ and
$\big(g_\lambda(\Gamma)\big)_{\lambda\geq0}$ are related via the
Hall-type transformation formula\footnote{See \cite[Cor.~1]{MuDiss}, or
\cite[Prop.~1]{DM} for a more general result.}  
\begin{equation}
\label{Eq:Transform}
\sum_{\mu=0}^{\lambda-1} g_\mu(\Gamma) f_{\lambda-\mu}(\Gamma) =
m_\Gamma \lambda g_\lambda(\Gamma),\quad \lambda\geq1. 
\end{equation}
Introducing the generating functions 
\[
F_\Gamma(z) := \sum_{\lambda=1}^\infty f_{\lambda}(\Gamma) z^\lambda
\,\mbox{ and }\, G_\Gamma(z):= \sum_{\lambda=0}^\infty g_\lambda(\Gamma)
z^\lambda, 
\]
Equation~\eqref{Eq:Transform} is seen to be equivalent to the relation
\begin{equation}
\label{Eq:GFTransform}
F_\Gamma(z) = m_\Gamma z\frac{d}{dz}\big(\log G_\Gamma(z)\big). 
\end{equation}

\medskip
Define the \textit{free rank} $\mu(\Gamma)$ of a finitely generated
virtually free group $\Gamma$ to be the rank of a free subgroup of
index $m_\Gamma$ in $\Gamma$ (existence of such a subgroup follows,
for instance, from Lemmas~8 and 10 in \cite{Serre2}; it need not be 
unique, though). It can be shown that the free rank $\mu(\Gamma)$ may be
expressed in terms of the type of $\Gamma$ via  
\begin{equation}
\label{Eq:muTypeRewrite}
\mu(\Gamma) = 1 + \sum_{\kappa\mid m_\Gamma} \varphi(m_\Gamma/\kappa)
\zeta_\kappa(\Gamma),
\end{equation}
which shows in particular that $\mu(\Gamma)$ is well-defined.
It is known that the sequence
$g_\lambda(\Gamma)$ is of hypergeometric type and that its generating
function $G_\Gamma(z)$ satisfies a homogeneous linear differential
equation 
\begin{equation}
\label{Eq:G(z)Diff}
\theta_0(\Gamma) G_\Gamma(z) \,+ \,(\theta_1(\Gamma) z - m_\Gamma)
G'_\Gamma(z) \,+ \,\sum _{\mu=2}^{\mu(\Gamma)} \theta_\mu(\Gamma)
z^\mu G^{(\mu)}_\Gamma(z) = 0 
\end{equation}
of order $\mu(\Gamma)$ with integral coefficients $\theta_\mu(\Gamma)$
given by 
\begin{equation}
\label{Eq:G(z)DiffCoeffs}
\theta_\mu(\Gamma) = \frac{1}{\mu!} \sum_{j=0}^\mu (-1)^{\mu-j}
\binom{\mu}{j} m_\Gamma (j+1) \prod_{\kappa\mid
  m_\Gamma}\,\underset{(m_\Gamma, k) = \kappa}{\prod_{1\leq k\leq
    m_\Gamma}} (jm_\Gamma + k)^{\zeta_\kappa(\Gamma)},\quad 0\leq
\mu\leq \mu(\Gamma); 
\end{equation}
cf.\ \cite[Prop.~5]{MuDiss}.

For a finitely generated virtually free group $\Gamma$ and a prime
number $p$, we introduce, in formal analogy with formula
\eqref{Eq:muTypeRewrite}, the  
\textit{$p$-rank $\mu_p(\Gamma)$} of $\Gamma$ via the equation 
\begin{equation}
\label{Eq:mupDef}
\mu_p(\Gamma) = 1 + \sum_{p\mid \kappa \mid m_\Gamma}
\varphi(m_\Gamma/\kappa) \zeta_\kappa(\Gamma). 
\end{equation}
Clearly, $\mu_p(\Gamma)\geq0$, with equality occurring if, and only if, $\Gamma$ is the fundamental group of a
tree of groups, $p \mid m_\Gamma$, and $\zeta_\kappa(\Gamma)=0$ for
$p\mid \kappa\mid 
m_\Gamma$ and $\kappa<m_\Gamma$. 
Since the free rank $\mu(\Gamma)$ and the $p$-rank $\mu_p(\Gamma)$ only depend
on the type invariants $m_\Gamma$ and the $\zeta_\kappa$'s, in view
of our earlier discussion they may also be defined for abstract
order graphs and, in particular, for an order graph $G$ of a finitely
generated virtually free group~$\Gamma$. Doing so, one has
$\mu(\Gamma)=\mu(G)$ and $\mu_p(\Gamma)=\mu_p(G)$. These conventions
will be used in the proof of Theorem~\ref{thm:mup=0}.

\medskip

In what follows, it will be important to be able to represent a
finitely generated 
virtually free group $\Gamma$ by a graph of groups avoiding trivial
amalgamations along a maximal tree. This is achieved via the following
auxiliary result.

\begin{lemma}[\sc Normalisation]
\label{Lem:Normalise}
Let $(\Gamma(-), X)$ be a {\em(}connected{\em)} graph of groups with
fundamental 
group $\Gamma,$ and suppose that $X$ has only finitely many
vertices. Then there exists a graph of groups $(\Delta(-), Y)$ with
$\vert V(Y)\vert < \infty$ and a spanning tree $T$ in $Y,$ such that
$\pi_1(\Delta(-),Y) \cong \Gamma$, and such that\footnote{The notation 
used in Equation~\eqref{eq:normal} follows Serre; 
see D\'ef.~8 in \cite[Sec.~4.4]{Serre2}.} 
\begin{equation} \label{eq:normal}
\Delta(e)^e \neq \Delta(t(e))\,\mbox{ and }\, \Delta(e)^{\bar{e}} \neq
\Delta(o(e)),\quad 
\text {for }e\in E(T). 
\end{equation}
Moreover, if $(\Gamma(-), X)$ satisfies the finiteness condition
\bigskip

\centerline{\hskip3cm
$(F_1)$ \qquad \mbox{$X$ is a finite $S$-graph,}\hfill}
\bigskip

\noindent
or
\bigskip

\centerline{\hskip3cm $(F_2)$  \qquad 
\mbox{$\Gamma(v)$ is finite for every vertex $v\in V(X),$}\hfill}
\bigskip

\noindent
then we may choose $(\Delta(-), Y)$ so as to enjoy the same property.
\end{lemma}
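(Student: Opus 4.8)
The plan is to pass from $(\Gamma(-),X)$ to $(\Delta(-),Y)$ by repeatedly \emph{collapsing} tree edges along which the amalgamation is trivial. Since $X$ is connected and has only finitely many vertices, it carries a spanning tree $T$, necessarily with $\vert V(X)\vert-1$ geometric edges (in particular without loops). Suppose some $e\in E(T)$ satisfies $\Gamma(e)^{e}=\Gamma(t(e))$, that is, the monomorphism $a\mapsto a^{e}$ of $\Gamma(e)$ into $\Gamma(t(e))$ is surjective, hence bijective; note $o(e)\neq t(e)$. I replace $(\Gamma(-),X)$ by the graph of groups $(\Gamma'(-),X')$ obtained as follows: delete the vertex $t(e)$ and the geometric edge $\{e,\bar e\}$; for every surviving edge $f$ with $o(f)=t(e)$ (resp.\ $t(f)=t(e)$) redefine its origin (resp.\ terminus) to be $o(e)$; leave the groups at all surviving vertices and edges unchanged; and for every such rerouted edge $f$ replace the monomorphism $a\mapsto a^{f}$ of $\Gamma(f)$ into $\Gamma(t(e))$ by its composite with the inverse of the (now bijective) map $a\mapsto a^{e}$, $\Gamma(e)\to\Gamma(e)^{e}=\Gamma(t(e))$, followed by the map $a\mapsto a^{\bar e}$, $\Gamma(e)\to\Gamma(e)^{\bar e}\subseteq\Gamma(o(e))$; this is again a monomorphism, now into the group $\Gamma(o(e))$ sitting at the merged vertex. (If instead $\Gamma(e)^{\bar e}=\Gamma(o(e))$ for some $e\in E(T)$, one carries out the symmetric collapse of $\bar e$; recall that $e\in E(T)$ precisely when $\bar e\in E(T)$.) It is routine to verify that $X'$ is again a connected $S$-graph, that the image $T'$ of $T$ under this edge contraction is a spanning tree of $X'$ (contracting an edge of a tree yields a tree), and that $\vert V(X')\vert=\vert V(X)\vert-1$.

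The essential point is that this collapse preserves the fundamental group, $\pi_{1}(\Gamma'(-),X',T')\cong\pi_{1}(\Gamma(-),X,T)$. This follows from the standard presentation of the fundamental group of a graph of groups relative to a spanning tree (cf.\ \cite[\S5.1]{Serre2}). In that presentation the vertex groups $\Gamma(o(e))$ and $\Gamma(t(e))$ are amalgamated along $\Gamma(e)$ via the two edge monomorphisms; since $\Gamma(e)^{e}=\Gamma(t(e))$, the amalgam $\Gamma(o(e))\ast_{\Gamma(e)}\Gamma(t(e))$ collapses onto $\Gamma(o(e))$, inside which $\Gamma(t(e))$ then sits as the subgroup $\Gamma(e)^{\bar e}$. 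Eliminating, by a Tietze transformation, the now-redundant generators coming from $\Gamma(t(e))$ (rewritten through $\Gamma(o(e))$) together with the generator $g_{e}$ attached to the tree edge $e$ (which equals $1$ in the presentation) transforms the presentation of $\pi_{1}(\Gamma(-),X,T)$ into precisely that of $\pi_{1}(\Gamma'(-),X',T')$, the modification of the edge monomorphisms at former $t(e)$-ends being exactly the one prescribed above. As the fundamental group of a connected graph of groups is independent, up to isomorphism, of the chosen spanning tree, this gives $\pi_{1}(\Gamma'(-),X')\cong\Gamma$.

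Each collapse strictly decreases the (finite, positive) number of vertices, so after finitely many steps one reaches a graph of groups $(\Delta(-),Y)$ carrying a spanning tree $T_{Y}$ to which no further collapse applies, i.e.\ $\Delta(e)^{e}\neq\Delta(t(e))$ and $\Delta(e)^{\bar e}\neq\Delta(o(e))$ for every $e\in E(T_{Y})$, which is exactly \eqref{eq:normal}; and trivially $\vert V(Y)\vert\le\vert V(X)\vert<\infty$. Finally, a collapse removes one geometric edge and introduces no new vertex group (the merged vertex retains $\Gamma(o(e))$), so if $(\Gamma(-),X)$ satisfies $(F_{1})$ then every intermediate $S$-graph, hence $Y$, is finite, while if $(\Gamma(-),X)$ satisfies $(F_{2})$ then every group $\Delta(v)$, $v\in V(Y)$, occurs among the $\Gamma(w)$, $w\in V(X)$, and is therefore finite.

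The only genuinely non-routine step is the verification that collapsing an edge with surjective edge monomorphism leaves the fundamental group unchanged, which amounts to reading off the effect of the collapse on Serre's presentation; the remaining points --- that the construction produces a legitimate $S$-graph and graph of groups, that the spanning tree survives, that the process terminates, and that the two finiteness hypotheses are inherited --- are pure bookkeeping.
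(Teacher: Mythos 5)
Your collapse argument is correct: contracting a spanning-tree edge along which one of the two edge monomorphisms is surjective, rerouting the incident edges through the induced isomorphism $\Gamma(t(e))\cong\Gamma(e)\to\Gamma(e)^{\bar e}\subseteq\Gamma(o(e))$, and iterating until no trivial amalgamation remains is exactly the standard normalisation procedure, and your handling of the presentation, the surviving spanning tree, termination, and the inheritance of $(F_1)$ and $(F_2)$ is sound. The paper itself does not prove this lemma but defers to \cite[Sec.~3]{KMNormal}; your proof is the expected one for that reference, so there is nothing substantive to compare beyond noting agreement.
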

See \cite[Sec.~3]{KMNormal} for a proof of this useful result.
Subsequently, we shall call a graph of groups $(\Delta(-), Y)$ {\it
  normalised}, if it
satisfies the conditions of the lemma for some
spanning tree $T$ of $Y$. In our situation, normalised graphs of groups
will always be trees, so coincide with their respective spanning
trees. We shall therefore suppress the reference to the spanning trees
from now on.

\section{Characterisation of finitely generated virtually free groups
$\Gamma$  with $\mu_p(\Gamma)=0$}
\label{sec:char}

Recall (see paragraph below \eqref{Eq:mupDef}) that, if a finitely
generated virtually free group satisfies $\mu_p(\Gamma)=0$ for a given
prime~$p$, then, in particular, $\Gamma$ is the fundamental group of a
tree of groups.
Theorem~\ref{thm:mup=0} below tells us how a normalised
(in the sense of Lemma~\ref{Lem:Normalise}) order tree\footnote{Here,
  {\it order tree} means an order graph which has the form of a tree.} $X$
underlying the Stallings decomposition $(\Gamma(-),X)$ 
of a finitely generated virtually
free group $\Gamma$ must be constructed so as to satisfy $\mu_p(\Gamma)=0$. 

Given a fixed prime number~$p$,
the starting point of our construction are certain finite rooted
vertex-labelled trees which we call {\it divisor trees}.
By definition, 
vertices of divisor trees
are labelled by positive integers coprime to~$p$. Moreover,
any two
adjacent vertices, say $v_1$ and $v_2$, with $v_1$ closer to the root
than $v_2$, satisfy $\ell(v_2)\mid \ell(v_1)$ and 
$\ell(v_2)< \ell(v_1)$, where $\ell(v_1)$ and $\ell(v_2)$ denote the
labels of $v_1$ and $v_2$. 
See Figure~\ref{fig:3} for an example of such a divisor tree.
There, the prime number to be fixed from the very beginning is $p=5$.
In the figure, the root is indicated by a square.

\begin{figure}
\unitlength.4cm
\begin{picture}(15,11)(-3,-1)
\put(4,5){\line(1,0){3}}
\put(4,5){\line(-1,1){3}}
\put(1,8){\line(-1,0){3}}
\put(4,5){\line(-1,-1){3}}
\put(7,5){\line(1,1){3}}
\put(7,5){\line(1,-1){3}}
\put(4,5){\raise0pt\hbox{\kern-4pt\large \vrule width7pt height3.5pt depth3.5pt}}
\put(-2,8){\raise0pt\hbox{\kern-3pt\circle*{.5}}}
\put(1,8){\raise0pt\hbox{\kern-3pt\circle*{.5}}}
\put(1,2){\raise0pt\hbox{\kern-3pt\circle*{.5}}}
\put(7,5){\raise0pt\hbox{\kern-3pt\circle*{.5}}}
\put(10,8){\raise0pt\hbox{\kern-3pt\circle*{.5}}}
\put(10,2){\raise0pt\hbox{\kern-3pt\circle*{.5}}}
\put(10.7,8){\raise-5pt\hbox{\kern-5pt 2}}
\put(10.7,2){\raise-5pt\hbox{\kern-5pt 3}}
\put(7,6){\raise-5pt\hbox{\kern-5pt 12}}
\put(4.4,6.1){\raise-5pt\hbox{\kern-5pt 504}}
\put(0,8){\raise5pt\hbox{\kern-1pt 42}}
\put(-3,8){\raise5pt\hbox{\kern-1pt 6}}
\put(0,2){\raise-5pt\hbox{\kern-5pt 2}}
\end{picture}
\vskip-.5cm
\caption{A divisor tree for $p=5$}
\label{fig:4}
\end{figure}
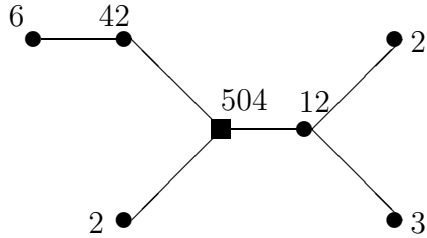

Given a divisor tree $D$, we fix a map $f$ from the set of vertices of $D$
into the set of (ordinary, unlabelled) finite 
rooted trees with the property that non-root vertices are mapped
to non-trivial\footnote{Here, `non-trivial' means `at least two vertices'.} 
trees rooted at a leaf and, 
in case $D$ consists of just the root, this root must be
mapped to a non-trivial tree (with no restriction on the location of
the root). Figure~\ref{fig:6} shows an example of such a map~$f$
defined on the vertices of the order tree in Figure~\ref{fig:4}.
There, the roots of the image trees are indicated by little circles.

From $D$ and $f$, we construct a certain class of order graphs.
The reader is advised to consult Figure~\ref{fig:5}
while reading the description of this construction in the following
paragraph. We remind the reader that, in that example, the fixed 
prime number is $p=5$.

\begin{figure}
\unitlength.4cm
\begin{picture}(15,21)(-3,-9)
\put(7,8){\line(1,0){5.5}}
\put(7,5){\line(1,0){2.5}}
\put(7,2){\line(1,0){5.5}}
\put(9.5,2){\line(-2,1){2}}
\put(9.5,2){\line(-2,-1){2}}
\put(7,-1){\line(2,1){2.5}}
\put(7,-1){\line(2,-1){2.5}}
\put(7,-4){\line(1,0){5.5}}
\put(7,-7){\line(1,0){2.5}}
\put(1,11){\raise0pt\hbox{\kern-4pt\large \vrule width7pt height3.5pt depth3.5pt}}
\put(1,8){\raise0pt\hbox{\kern-3pt\circle*{.5}}}
\put(1,5){\raise0pt\hbox{\kern-3pt\circle*{.5}}}
\put(1,2){\raise0pt\hbox{\kern-3pt\circle*{.5}}}
\put(1,-1){\raise0pt\hbox{\kern-3pt\circle*{.5}}}
\put(1,-4){\raise0pt\hbox{\kern-3pt\circle*{.5}}}
\put(1,-7){\raise0pt\hbox{\kern-3pt\circle*{.5}}}
\put(3,11){\raise-3pt\hbox{\kern-3pt$\longrightarrow$}}
\put(3,8){\raise-3pt\hbox{\kern-3pt$\longrightarrow$}}
\put(3,5){\raise-3pt\hbox{\kern-3pt$\longrightarrow$}}
\put(3,2){\raise-3pt\hbox{\kern-3pt$\longrightarrow$}}
\put(3,-1){\raise-3pt\hbox{\kern-3pt$\longrightarrow$}}
\put(3,-4){\raise-3pt\hbox{\kern-3pt$\longrightarrow$}}
\put(3,-7){\raise-3pt\hbox{\kern-3pt$\longrightarrow$}}
\put(0,11){\raise-5pt\hbox{\kern-15pt 504}}
\put(0,8){\raise-5pt\hbox{\kern-5pt 6}}
\put(0,-1){\raise-5pt\hbox{\kern-10pt 12}}
\put(0,2){\raise-5pt\hbox{\kern-5pt 2}}
\put(-2,2){\raise-15pt\hbox{\kern-10pt (left-bottom)}}
\put(0,5){\raise-5pt\hbox{\kern-10pt 42}}
\put(0,-4){\raise-5pt\hbox{\kern-5pt 2}}
\put(-2,-4){\raise-15pt\hbox{\kern-10pt (right-top)}}
\put(0,-7){\raise-5pt\hbox{\kern-5pt 3}}
\put(7,11){\raise0pt\hbox{\kern-3pt\circle{.5}}}
\put(7,8){\raise0pt\hbox{\kern-3pt\circle*{.5}}}
\put(10,8){\raise0pt\hbox{\kern-3pt\circle*{.5}}}
\put(13,8){\raise0pt\hbox{\kern-3pt\circle{.5}}}
\put(7,-4){\raise0pt\hbox{\kern-3pt\circle{.5}}}
\put(10,-4){\raise0pt\hbox{\kern-3pt\circle*{.5}}}
\put(13,-4){\raise0pt\hbox{\kern-3pt\circle*{.5}}}
\put(7,-1){\raise0pt\hbox{\kern-3pt\circle{.5}}}
\put(10,0.3){\raise0pt\hbox{\kern-3pt\circle*{.5}}}
\put(10,-2.3){\raise0pt\hbox{\kern-3pt\circle*{.5}}}
\put(7,5){\raise0pt\hbox{\kern-3pt\circle*{.5}}}
\put(10,5){\raise0pt\hbox{\kern-3pt\circle{.5}}}
\put(7,2){\raise0pt\hbox{\kern-3pt\circle*{.5}}}
\put(10,2){\raise0pt\hbox{\kern-3pt\circle*{.5}}}
\put(13,2){\raise0pt\hbox{\kern-3pt\circle{.5}}}
\put(8,3){\raise0pt\hbox{\kern-3pt\circle*{.5}}}
\put(8,1){\raise0pt\hbox{\kern-3pt\circle*{.5}}}
\put(7,-7){\raise0pt\hbox{\kern-3pt\circle{.5}}}
\put(10,-7){\raise0pt\hbox{\kern-3pt\circle*{.5}}}
\end{picture}
\vskip-.5cm
\caption{A function $f$ on the vertices of the divisor tree of
  Figure~\ref{fig:4}}
\label{fig:6}
\end{figure}

\begin{figure}
\unitlength.4cm
\begin{picture}(15,15)(-3,-5)
\put(4,5){\line(1,0){3}}
\put(4,5){\line(-1,1){3}}
\put(1,8){\line(-1,0){6}}
\put(4,5){\line(-1,-1){3}}
\put(7,5){\line(1,1){3}}
\put(7,5){\line(1,-1){3}}
\put(10,8){\line(1,0){6}}
\put(10,2){\line(1,0){3}}
\put(1,2){\line(-1,0){4}}
\put(1,2){\line(-1,-3){1.5}}
\put(1,2){\line(2,-1){4}}
\put(4,5){\raise0pt\hbox{\kern-4pt\large \vrule width7pt height3.5pt depth3.5pt}}
\put(-5,8){\raise0pt\hbox{\kern-3pt\circle*{.5}}}
\put(-2,8){\raise0pt\hbox{\kern-3pt\circle*{.5}}}
\put(1,8){\raise0pt\hbox{\kern-3pt\circle*{.5}}}
\put(1,2){\raise0pt\hbox{\kern-3pt\circle*{.5}}}
\put(7,5){\raise0pt\hbox{\kern-3pt\circle*{.5}}}
\put(16,8){\raise0pt\hbox{\kern-3pt\circle*{.5}}}
\put(13,8){\raise0pt\hbox{\kern-3pt\circle*{.5}}}
\put(10,8){\raise0pt\hbox{\kern-3pt\circle*{.5}}}
\put(13,2){\raise0pt\hbox{\kern-3pt\circle*{.5}}}
\put(10,2){\raise0pt\hbox{\kern-3pt\circle*{.5}}}
\put(-3,2){\raise0pt\hbox{\kern-3pt\circle*{.5}}}
\put(-0.2,-2.4){\raise0pt\hbox{\kern-3pt\circle*{.5}}}
\put(5,0){\raise0pt\hbox{\kern-3pt\circle*{.5}}}
\put(16.7,8){\raise6pt\hbox{\kern-16pt 10}}
\put(13.7,8){\raise6pt\hbox{\kern-16pt 10}}
\put(10.7,8){\raise6pt\hbox{\kern-16pt 60}}
\put(13.7,2){\raise-13pt\hbox{\kern-16pt 15}}
\put(10.7,2){\raise-13pt\hbox{\kern-16pt 60}}
\put(7,6){\raise-5pt\hbox{\kern-5pt 60}}
\put(4.4,6.1){\raise-5pt\hbox{\kern-5pt 5040}}
\put(0,8){\raise5pt\hbox{\kern-1pt 210}}
\put(-2.8,8){\raise5pt\hbox{\kern-1pt 30}}
\put(-6,8){\raise5pt\hbox{\kern-1pt 30}}
\put(0,2){\raise5pt\hbox{\kern-0pt 10}}
\put(-4,2){\raise-5pt\hbox{\kern-10pt 10}}
\put(-1,-3){\raise-8pt\hbox{\kern-5pt 10}}
\put(5,0){\raise-15pt\hbox{\kern-5pt 10}}
\put(14.8,7){\raise0pt\hbox{\kern-10pt 2}}
\put(11.8,7){\raise0pt\hbox{\kern-10pt 2}}
\put(11.8,2){\raise3pt\hbox{\kern-10pt 3}}
\put(8.3,2.5){\raise3pt\hbox{\kern-10pt 12}}
\put(9.3,5.5){\raise3pt\hbox{\kern-10pt 12}}
\put(5.8,4){\raise0pt\hbox{\kern-10pt 12}}
\put(3.7,2.5){\raise3pt\hbox{\kern-10pt 2}}
\put(3.2,0){\raise3pt\hbox{\kern-10pt 2}}
\put(0.2,-1){\raise3pt\hbox{\kern-10pt 2}}
\put(-.8,.8){\raise3pt\hbox{\kern-10pt 2}}
\put(2.3,5.5){\raise3pt\hbox{\kern-10pt 42}}
\put(-.2,7){\raise0pt\hbox{\kern-10pt 6}}
\put(-3.2,7){\raise0pt\hbox{\kern-10pt 6}}
\end{picture}
\vskip-.5cm
\caption{An order tree resulting from the divisor tree of
  Figure~\ref{fig:4} and the function $f$ from Figure~\ref{fig:6}}
\label{fig:5}
\end{figure}

If $v_1$ and $v_2$ are adjacent in $D$, where
$v_1$ is closer to the root than $v_2$, then we glue the root of
$f(v_2)$ to one of the leaves of $f(v_1)$. If this is done for all
edges of $D$, we obtain a rooted tree, $U$ say, where the root of $U$
is by definition the root of $f(r)$, with $r$ being the root of $D$. 
In Figure~\ref{fig:5}, the root is again indicated by a square.
Given some vertex $v$ in $D$,
we label the edges in $f(v)$ by $\ell(v)$ and the non-root vertices
in $f(v)$ by $p\cdot\ell(v)$. The root of $U$ (that is, the root of $f(r)$) 
is assigned a number which is a multiple of
$$
p\cdot\lcm\{\ell(v):v\in D\}.
$$
Abusing notation, we write $f(D)$ for the set of order graphs
resulting from this construction. All of them are trees. We shall
occasionally use the term {\it order tree} for these order graphs.

\begin{theorem} \label{thm:mup=0}
Let $\Gamma\cong \pi_1(\Gamma(-),X)$ 
be a finitely generated virtually free group with $\mu(\Gamma)\ge2$, 
where $X$ is an $S$-graph
which is assumed to be normalised in the sense of 
Lemma~{\em\ref{Lem:Normalise}}.
Then $\mu_p(\Gamma)=0$ if, and only if,
there exist a divisor tree $D$ and a map
$f$ 
as above from the set of vertices of $D$ into the set of
finite rooted trees such that the order tree corresponding
to~$(\Gamma(-),X)$ is in $f(D)$.
\end{theorem}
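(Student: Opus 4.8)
The plan is to prove the two implications separately. The direction in which we assume the order tree of $\Gamma$ to lie in some $f(D)$ and must deduce $\mu_p(\Gamma)=0$ is a short computation with the type invariants, so I would dispose of it first. Write $G$ for the order tree of $\Gamma$. Since $G$ is a tree, $\Gamma$ is the fundamental group of a tree of groups and $\zeta_{m_\Gamma}(\Gamma)=-1$; since at least one of the trees $f(v)$ is non-trivial, $G$ has a vertex of label $p\,\ell(v)$, whence $p\mid m_\Gamma$. By the criterion for $\mu_p=0$ recalled after \eqref{Eq:mupDef} it remains to verify $\zeta_\kappa(\Gamma)=0$ for every $\kappa$ with $p\mid\kappa\mid m_\Gamma$ and $\kappa<m_\Gamma$. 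Writing $\kappa=p^a\kappa'$ with $a\ge1$ and $p\nmid\kappa'$, an edge of the block $f(v)$ (label $\ell(v)$, prime to $p$) divides $\kappa$ if and only if $\ell(v)\mid\kappa'$; a non-root vertex of $f(v)$ (label $p\,\ell(v)$) divides $\kappa$ if and only if, likewise, $\ell(v)\mid\kappa'$; and the root vertex of $G$ has label $m_\Gamma>\kappa$, so it contributes nothing. Hence in \eqref{eq:zeta} the edges and the non-root vertices stemming from each block $f(v)$ are counted together, and their contributions cancel because $f(v)$ is a tree. This gives $\zeta_\kappa(\Gamma)=0$, and with it $\mu_p(\Gamma)=0$.

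For the converse I would start from $\mu_p(\Gamma)=0$, which tells us that $\Gamma$ is a tree of groups so that, $X$ being normalised in the sense of Lemma~\ref{Lem:Normalise}, $X$ is an order tree $G$ in which $n(e)$ is a \emph{proper} divisor of $n(v)$ whenever $v$ lies on $e$; moreover $p\mid m_\Gamma$, $\zeta_{m_\Gamma}(\Gamma)=-1$, and $\zeta_\kappa(\Gamma)=0$ for all $\kappa$ with $p\mid\kappa\mid m_\Gamma$, $\kappa<m_\Gamma$. The first step is a connectivity lemma: for each such $\kappa$, putting $\bar V_\kappa=\{v:n(v)\nmid\kappa\}$ and $\bar E_\kappa=\{e:n(e)\nmid\kappa\}$, the set $\bar V_\kappa$ spans a subtree of $G$ whose edge set is exactly $\bar E_\kappa$. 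Indeed every endpoint of an edge in $\bar E_\kappa$ lies in $\bar V_\kappa$, so $\bar E_\kappa$ sits inside the induced subforest $G[\bar V_\kappa]$; and subtracting $\zeta_\kappa=0$ from $\zeta_{m_\Gamma}=-1$ yields $\vert\bar V_\kappa\vert-\vert\bar E_\kappa\vert=1$, which forces $G[\bar V_\kappa]$ to be a tree with edge set $\bar E_\kappa$. Applying this with $\kappa=m_\Gamma/q$ for the primes $q\ne p$ dividing $m_\Gamma$ (and with $\kappa=m_\Gamma/p$ when $p^2\mid m_\Gamma$), and using that $\bar V_{m_\Gamma/(q_1q_2)}=\bar V_{m_\Gamma/q_1}\cup\bar V_{m_\Gamma/q_2}$ is again a subtree, one sees that these subtrees intersect pairwise — a bridging edge between two of them would carry a label violating the connectivity lemma — so by the Helly property for subtrees of a tree they have a common vertex $\rho$, necessarily with $n(\rho)=m_\Gamma$ (or, when $p\,\Vert\,m_\Gamma$, possibly $n(\rho)=m_\Gamma/p$). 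I would root $G$ at $\rho$.

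The technical heart of the proof, and the step I expect to be the main obstacle, is then to show that \emph{every} edge label of $G$ is prime to $p$ and that $n(v)=p\cdot n(e_v)$ for every $v\ne\rho$, where $e_v$ denotes the edge joining $v$ to its parent. Here one must squeeze the full strength out of the many identities $\zeta_\kappa(\Gamma)=0$, together with properness of the divisibility $n(e)\mid n(v)$ and the hypothesis $\mu(\Gamma)\ge2$; by \eqref{Eq:muTypeRewrite} and the vanishing already established, the latter amounts to $\sum_{p\nmid\kappa\mid m_\Gamma}\varphi(m_\Gamma/\kappa)\,\zeta_\kappa(\Gamma)\ge2$, guaranteeing ``enough'' edges of label prime to $p$. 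The connectivity lemma reduces these to local questions around a single edge or vertex — for a fixed edge $e$, test $\kappa=p\cdot n(e)$ and neighbouring divisors, and track which of the two endpoints of $e$ can fail to lie in $\bar V_\kappa$ — but I anticipate that organising this local analysis uniformly across all prime divisors of $m_\Gamma$, and around the few boundary configurations with $p\,\Vert\,m_\Gamma$, will be the genuinely delicate part.

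Granting that structure, the edge labels decrease weakly for divisibility along paths leading away from $\rho$ and strictly across a boundary between maximal connected blocks of edges sharing a common label. The last step is bookkeeping: take the vertices of $D$ to be those blocks (subdivided, if necessary, at the block vertex nearest $\rho$, so that it becomes a leaf of its block), with $\ell$ the common edge label and with $B$ closer to the root of $D$ than $B'$ exactly when $B$ separates $B'$ from $\rho$; the divisibility relations just obtained are precisely the defining conditions of a divisor tree, the assignment $f(B):=B$ rooted at its $\rho$-nearest vertex meets the requirements on $f$ (non-root blocks become non-trivial trees rooted at a leaf), and the labels of $G$ are exactly those prescribed by the construction, with $\rho$ carrying $m_\Gamma$, a multiple of $p\cdot\lcm\{\ell(B)\}$. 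Hence the order tree of $\Gamma$ lies in $f(D)$, which would finish the argument.
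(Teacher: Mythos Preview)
Your route is genuinely different from the paper's. The paper works bottom-up: it takes the minimal edge order $m$, shows $p\nmid m$, proves that the sub-order-graph $S_{pm}(G)$ of objects whose order divides $pm$ consists precisely of edges of order $m$ and vertices of order $pm$, shows each component of $S_{pm}(G)$ is a tree with exactly one boundary vertex missing, peels these components off, and iterates on the remainder $G'$ (noting $\zeta_\kappa(G')=\zeta_\kappa(G)$); the components collected along the way become the blocks $f(v)$ and their labels $m,m',\dots$ the divisor-tree labels. Your approach is top-down: a connectivity lemma for the complements $\bar V_\kappa$, a Helly argument to locate a root $\rho$, and then a structural claim about all edges and non-root vertices at once. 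The paper's version is more constructive and avoids the Helly step; yours gives a cleaner global picture and makes the divisibility relations in the divisor tree transparent.

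The step you flag as the main obstacle is in fact short once your connectivity lemma is in hand; you are over-estimating the difficulty. For $p\nmid n(e)$: if $p\mid n(e)$, set $\kappa=n(e)<m_\Gamma$; both endpoints of $e$ have order a proper multiple of $n(e)$, hence lie in $\bar V_\kappa$, yet $e\notin\bar E_\kappa$, contradicting $E(G[\bar V_\kappa])=\bar E_\kappa$. For $n(v)=p\,n(e_v)$: set $\kappa=p\,n(e_v)$, which satisfies $p\mid\kappa\mid m_\Gamma$ since $p\nmid n(e_v)$. If $\kappa<m_\Gamma$, observe $\rho\in\bar V_\kappa$ (as $\bar V_\kappa\supseteq\bar V_{m_\Gamma/q}$ for any prime $q\ne p$ dividing $m_\Gamma/\kappa$, and such a $q$ exists). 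If $n(v)\nmid\kappa$ then $v\in\bar V_\kappa$, so the path from $v$ to $\rho$ lies in $G[\bar V_\kappa]$; this forces the parent $u$ into $\bar V_\kappa$ and hence $e_v\in\bar E_\kappa$, contradicting $n(e_v)\mid\kappa$. Thus $n(e_v)\mid n(v)\mid p\,n(e_v)$ with $n(v)>n(e_v)$, giving $n(v)=p\,n(e_v)$. The boundary case $\kappa=m_\Gamma$ only arises when $p\,\Vert\,m_\Gamma$ and $n(e_v)=m_\Gamma/p$, whence $n(v)=m_\Gamma$ trivially. No ``local analysis across all prime divisors'' is needed.

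One genuine loose end: in the case $p\,\Vert\,m_\Gamma$ you correctly allow $n(\rho)=m_\Gamma/p$, and this does occur (for $\mathfrak H(7)$ with $p=7$ your Helly intersection is the single vertex of order~$2$). But your final sentence then asserts ``$\rho$ carrying $m_\Gamma$, a multiple of $p\cdot\lcm\{\ell(B)\}$,'' which fails here ($2$ is not a multiple of~$7$). You should treat this boundary case explicitly rather than fold it into the generic statement.
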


\begin{proof}
We start with the proof of the forward implication.
We consider the order graph of $(\Gamma(-),X)$, which we denote by~$G$.
By the characterisation of groups~$\Gamma$ with
$\mu_p(\Gamma)=0$ given in the paragraph after \eqref{Eq:mupDef}, we 
know that $G$ is a tree, that, using the identification of invariants
of $\Gamma$ and $G$ discussed in Section~\ref{sec:back},
$p \mid m_G$, and that $\zeta_\kappa(G)=0$ for
$p\mid \kappa\mid m_G$ and $\kappa<m_G$. 
Also, since $(\Gamma(-), X)$ is normalised, we have $n(e) < n(v)$ if
$v\in V(G)$ is incident with $e\in E(G)$.  

Let $m$ be the minimal order of vertices and edges in $G$.
Since $(\Gamma(-),X)$ is assumed to be normalised and $\mu(\Gamma)\ge2$,
this order must be the order of an edge.
Furthermore, $m$ cannot be divisible by~$p$ since otherwise we would have
$$
0=\zeta_m(G)=
\big\vert\big\{e\in E(G):\, n(e)=m\big\}\big\vert>0,
$$
a contradiction. 

In order to proceed, we need to introduce an auxiliary object.
Let $\ell $ be a positive integer. We let $S_\ell (G)$ be the
collection of subtrees of $G$ consisting of those vertices and
edges with orders dividing~$\ell $.
It should be noted that the connected components of $S_\ell(G)$ 
need not be trees
in the classical sense since they may contain edges with one or both
of their vertices removed. 

Next we consider $S_{pm}(G)$. We claim that $S_{pm}(G)$
consists of all vertices and edges with orders $m$ or $pm$, but
no other vertices or edges. Let us assume for a contradiction
that there is a vertex~$v^*$ with $n(v^*)=pm'$ 
or an edge $e^*$ with $n(e^*)=pm'$, 
$m'\mid m$ and $m'<m$,
where $m'$ is minimal with this property. 
If there should be such a vertex~$v^*$, then it is
incident 
with an edge~$\tilde e$ with $n(\tilde e)$
properly dividing $n(v^*)=pm'$. The order 
$n(\tilde e)$ cannot be $m'$ since this would contradict
minimality of $m$. Thus, $n(\tilde e)=pm''$ with $m''\mid m'$
and $m''<m'$, contradicting the minimality of $m'$.
On the other hand, if there is an edge~$e^*$ as above,
then we have
$$
0=\zeta_{pm'}(G)=
\big\vert\big\{e\in E(G):\, n(e)
\mid pm'\big\}\big\vert\,-\, \big\vert\big\{v\in V(G):\,
n(v)\mid pm'\big\}\big\vert.
$$
Thus, there must be at least one vertex $v$ with
$n(v)\mid pm'$. If $n(v)< pm'$, then
we have again a contradiction to the minimality of~$m'$. If
$n(v)=pm'$, then the above argument for $v^*$ also
produces a contradiction to the minimality of~$m'$.

If $pm=m_G$, then $S_{pm}(G)=G$ and indeed
$\mu_p(\Gamma)=\mu_p(G)=0$. 

If $pm<m_G$, then the connected components of $S_{pm}(G)$
might be of two kinds: either an edge with order $pm$ without
vertices, or a subtree of $G$ consisting of edges with
order~$m$ and of some vertices of these edges, which
have order $pm$. If all vertices of the edges would be
part of the component, then this would already be the complete
tree $G$, which is impossible by our assumption that $pm<m_G$. We may
therefore assume that in each component there is at least one
vertex 
of some edge missing. This vertex must be a leaf of the tree
structure. (If not, the subtree would actually decompose into smaller
trees.) In that case, each component contributes
a non-negative number to
$$
\zeta_{pm}(G)=
\big\vert\big\{e\in E(G):\, n(e)
\mid pm\big\}\big\vert\,-\, \big\vert\big\{v\in V(G):\,
n(v)\mid pm\big\}\big\vert.
$$
Since $\zeta_{pm}(G)=0$, all components must actually contribute zero.
This implies that components of the first kind cannot exist, and all
components consist of edges of order~$m$ and an equal
number of vertices with order~$pm$, that is, exactly one
of the vertices is missing from the 
tree component. 

We now remove $S_{pm}(G)$ from $G$. What remains is another order
tree, say $G'$. It is easy to see that our construction guarantees
that $\zeta_\kappa(G')=\zeta_\kappa(G)$ for all $\kappa$.

We repeat the above construction for~$G'$, with
a new minimal order~$m'>m$. This process is continued until
nothing remains from the original order tree $G$. 

We now form a divisor tree out of the pieces of this construction.
Each connected component of $S_{pm}(G)$, of $S_{pm'}(G')$, \dots\
is interpreted as a vertex labelled by~$m$, by $m'$, \dots, respectively,
and two vertices, $v_1$ and $v_2$ say, are connected by an edge if 
the (incomplete) tree, $C(v_2)$ say, 
corresponding to $v_2$ was attached to the 
(incomplete) tree, $C(v_1)$ say, corresponding to $v_1$ in the original order
tree~$G$. The label of~$v_2$ divides the one of~$v_1$
since the order of the leaf of~$C(v_1)$ on which
$C(v_2)$ was attached must be a multiple of the order of the edges of
$C(v_2)$. 

\medskip
Finally, to see the reverse implication,
one has to convince oneself that the 
divisor tree construction of the theorem always yields order trees
$G$ with $\mu_p(G)=0$, and that we have $\mu_p(\Gamma)=0$ 
for any group~$\Gamma$ of the theorem with
order graph equal to~$G$, which is not difficult.
\end{proof}

\section{auxiliary results}
\label{sec:aux}

The purpose of this section is to provide the means for the proof
of Theorem~\ref{thm:1} in the next section. Lemma~\ref{lem:Phi'}
below demonstrates that the derivatives of our basic series
$\Phi(z)$ in \eqref{eq:Phi} can be expressed as a polynomial
in $\Phi(z)$ with rational coefficients, which is one of the
fundamental facts needed in the proof of Theorem~\ref{thm:1}.
The proof of the lemma is based on the evaluation of the
determinant of a block matrix given in Lemma~\ref{lem:M},
which itself uses another determinant evaluation, provided
in Lemma~\ref{lem:detA}. The determinant evaluation of
Lemma~\ref{lem:M} also plays a crucial role in the proof
of Theorem~\ref{thm:1}.

Let $p$ be a given prime number. In all of this section, we
write $N$ for $\mu(\Gamma)/(p-1)$. Using this notation, the series
$\Phi(z)$ in \eqref{eq:Phi} becomes
$$
\Phi(z)=\sum_{n=1}^\infty (-1)^{Nn-\frac {n-1} {p-1}}
\frac {1} {n}\binom {Nn} {\frac {n-1} {p-1}}\,z^n.
$$
A straightforward application of the Lagrange inversion formula
(cf.\ \cite[Theorem~5.4.2]{StanBI}) shows that
$\Phi(z)$ is the unique formal power series solution of the 
equation
\begin{equation} \label{eq:PhiGl}
\Phi(z)-z\left(\Phi^{p-1}(z)-1\right)^N=0.
\end{equation}

\begin{lemma} \label{lem:Phi'}
We have
\begin{equation} \label{eq:Phi'z} 
\Phi'(z)=\frac {\text{\em Pol}(z,\Phi(z))} 
{(-1)^{(p-1)N}\big((p-1)N\big)^{(p-1)N}z^{p-1}+
\big((p-1)N-1\big)^{(p-1)N-1}},
\end{equation}
where $\text{\em Pol}(z,t)$ is a polynomial in $z$ and $t$ over the integers.
\end{lemma}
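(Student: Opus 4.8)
The plan is to differentiate the functional equation \eqref{eq:PhiGl} and solve for $\Phi'(z)$, using \eqref{eq:PhiGl} itself repeatedly to reduce powers of $\Phi(z)$. Write $M=(p-1)N$ for brevity, so that \eqref{eq:PhiGl} reads $\Phi=z(\Phi^{p-1}-1)^N$. Differentiating both sides gives
\begin{equation*}
\Phi'=(\Phi^{p-1}-1)^N+z\cdot N(\Phi^{p-1}-1)^{N-1}(p-1)\Phi^{p-2}\Phi'.
\end{equation*}
Using $z(\Phi^{p-1}-1)^N=\Phi$ on the first term, and $z(\Phi^{p-1}-1)^{N-1}=\Phi/(\Phi^{p-1}-1)$ on the second, this becomes
\begin{equation*}
\Phi'=\frac{\Phi}{z}+\frac{N(p-1)\Phi^{p-1}}{\Phi^{p-1}-1}\,\Phi',
\end{equation*}
so that
\begin{equation*}
\Phi'\left(\Phi^{p-1}-1-N(p-1)\Phi^{p-1}\right)=\frac{\Phi}{z}\left(\Phi^{p-1}-1\right),
\end{equation*}
i.e.
\begin{equation*}
\Phi'=\frac{\Phi(\Phi^{p-1}-1)}{z\big((1-M)\Phi^{p-1}-1\big)}.
\end{equation*}

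The first thing to do is to clear the $z$ in the denominator: multiplying numerator and denominator of the right-hand side by $(\Phi^{p-1}-1)^{N-1}$ and again invoking \eqref{eq:PhiGl} in the form $z(\Phi^{p-1}-1)^{N}=\Phi$ turns the factor $z(\Phi^{p-1}-1)$ in the denominator into $\Phi/(\Phi^{p-1}-1)^{N-2}$ — more cleanly, multiply top and bottom by $(\Phi^{p-1}-1)^{N}$ and write $z(\Phi^{p-1}-1)^{N}=\Phi$ to obtain
\begin{equation*}
\Phi'=\frac{(\Phi^{p-1}-1)^{N+1}}{(1-M)\Phi^{p-1}-1}.
\end{equation*}
Thus both numerator and denominator are now genuine polynomials in $\Phi$ (with no $z$), but the denominator still depends on $\Phi$, whereas \eqref{eq:Phi'z} demands a denominator that is a fixed polynomial in $z$ alone. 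The key remaining step is therefore to rationalise: regard $\Phi$ as a root of the polynomial $P(t)=(1-M)t^{p-1}-1$ modulo the auxiliary relation forced by \eqref{eq:PhiGl}, and multiply numerator and denominator by a suitable cofactor so that the denominator becomes symmetric in the relevant quantities. Concretely, I expect to multiply by $\prod(\text{other factors})$ to replace $(1-M)\Phi^{p-1}-1$ by (a constant multiple of) the resultant, in the variable $t$, of $P(t)=(1-M)t^{p-1}-1$ and the defining relation; since $\Phi(z)=z+O(z^2)$ satisfies a degree-$(p-1)$-ish algebraic relation over $\mathbb{Q}[z]$ coming from \eqref{eq:PhiGl}, this resultant is a polynomial in $z$. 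Carrying out the resultant computation — or equivalently, using that the product of $(1-M)\zeta^{p-1}-1$ over the conjugates $\zeta$ of $\Phi$ is, up to sign, the value of the discriminant-type expression — should yield exactly the denominator
\begin{equation*}
(-1)^{M}M^{M}z^{p-1}+(M-1)^{M-1}
\end{equation*}
displayed in \eqref{eq:Phi'z}; the two terms correspond to the constant term and the $z^{p-1}$-coefficient of that resultant, and the numerator $\mathrm{Pol}(z,\Phi(z))$ absorbs the cofactor times $(\Phi^{p-1}-1)^{N+1}$.

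The main obstacle is precisely this rationalisation step: identifying the correct multiplier and verifying that the resulting denominator is the clean binomial-in-$z^{p-1}$ expression claimed, rather than some messier polynomial. The cleanest route is to compute the resultant $\mathrm{Res}_t\big((1-M)t^{p-1}-1,\ t-z(t^{p-1}-1)^{N}\big)$ directly: writing $w=t^{p-1}$, the first polynomial vanishes at $w=1/(1-M)$, and substituting this single value of $w$ into (a power of) the second relation gives a linear expression in $z$, whose normalisation against the leading coefficients produces the two terms $(-1)^{M}M^{M}z^{p-1}$ and $(M-1)^{M-1}$. One must be slightly careful that $p-1$ divides $M=(p-1)N$ so that the exponents work out to integers, and that the signs are tracked correctly through the substitution $w\mapsto w^{1/(p-1)}$; these are routine but need checking. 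Everything else — that $\mathrm{Pol}$ has integer coefficients, and that no extraneous denominator survives — then follows since all intermediate manipulations involve only multiplication by polynomials and a single division by the resultant, which is monic up to the explicit constant $(M-1)^{M-1}$.
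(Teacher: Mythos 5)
Your proposal is correct, and it takes a genuinely different route from the paper. The reduction to
$\Phi'(z)=\big(\Phi^{p-1}(z)-1\big)^{N+1}\big/\big((1-(p-1)N)\Phi^{p-1}(z)-1\big)$
agrees with \eqref{eq:Phi'} after clearing the factor $z$, and the rationalisation via the resultant does work: with $f(t)=(1-(p-1)N)t^{p-1}-1$ and $g(t)=t-z(t^{p-1}-1)^N$, every root $\alpha$ of $f$ satisfies $\alpha^{p-1}-1=(p-1)N/(1-(p-1)N)$, and the product formula for $\operatorname{Res}_t(f,g)$ then yields, after a short computation, exactly $(-1)^{p-1+(p-1)N}$ times the denominator in \eqref{eq:Phi'z} (I checked the signs; e.g.\ for $p=2$, $N=2$ one gets $-(4z+1)$). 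The one point you leave implicit --- that the cofactor by which you multiply has coefficients in $\Z[z]$ --- follows from the Bezout identity $\operatorname{Res}_t(f,g)=a(t)f(t)+b(t)g(t)$ with $a,b\in\Z[z][t]$, which upon evaluation at $t=\Phi(z)$ gives $1/f(\Phi(z))=a(\Phi(z))/\operatorname{Res}_t(f,g)$, so $\text{Pol}(z,t)=\pm(t^{p-1}-1)^{N+1}a(t)$ is indeed integral. The paper proceeds differently: it makes the Ansatz \eqref{eq:Glsys} that the reciprocal of the denominator in \eqref{eq:Phi'} is a polynomial of degree $(p-1)N-1$ in $\Phi(z)$, derives the linear system \eqref{eq:Abc}, and evaluates the determinant of the structured block matrix \eqref{eq:M} in Lemmas~\ref{lem:M} and \ref{lem:detA}. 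The two computations are the same number in disguise --- that determinant equals your resultant divided by $z^{p-1}$, up to sign. What your route buys is a shorter, more conceptual derivation of the denominator, with its two terms dropping out of a one-line substitution; what the paper's route buys is reusable infrastructure, since the very same coefficient matrix and its determinant reappear in the proof of Theorem~\ref{thm:1} when solving for the corrections $b_{i,\beta+1}(z)$, which is where the explicit linear-system setup pays off.
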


\begin{proof}
Differentiating both sides of \eqref{eq:PhiGl}, we obtain
$$
\Phi'(z)-\left(\Phi^{p-1}(z)-1\right)^N
-zN(p-1)\Phi'(z)\Phi^{p-2}(z)\left(\Phi^{p-1}(z)-1\right)^{N-1}=0.
$$
Hence,
\begin{equation} \label{eq:Phi'} 
\Phi'(z)=\frac {\left(\Phi^{p-1}(z)-1\right)^N}
{1-zN(p-1)\Phi^{p-2}(z)\left(\Phi^{p-1}(z)-1\right)^{N-1}}.
\end{equation}
We must now express the reciprocal of the denominator
as a polynomial in $\Phi(z)$. In order to do this, we make
the Ansatz
\begin{equation} \label{eq:Glsys} 
\left(1-zN(p-1)\Phi^{p-2}(z)\left(\Phi^{p-1}(z)-1\right)^{N-1}\right)
\sum_{i=0}^{(p-1)N-1}b_i(z)\Phi^i(z)=1,
\end{equation}
with at this point undetermined coefficients $b_i(z)$, where
the sum represents the reciprocal of the denominator in \eqref{eq:Phi'}.
We multiply both sides of the last equation by
$\left(\Phi^{p-1}(z)-1\right)$. Then, using \eqref{eq:PhiGl}, we obtain
\begin{equation} \label{eq:bi} 
\left(\big(1-N(p-1)\big)\Phi^{p-1}(z)-1\right)
\sum_{i=0}^{(p-1)N-1}b_i(z)\Phi^i(z)=\Phi^{p-1}(z)-1,
\end{equation}
We expand the product on the left-hand side and use \eqref{eq:PhiGl}
again to reduce $\Phi^{(p-1)N}(z)$ to a linear combination of
lower powers of $\Phi(z)$. This leads to
\begin{multline*}
\big(1-N(p-1)\big)\sum_{i=0}^{(p-1)N-p}b_i(z)\Phi^{i+p-1}(z)\\
+
\big(1-N(p-1)\big)\sum_{i=0}^{p-2}b_{i+(p-1)(N-1)}(z)
\bigg(z^{-1}\Phi^{i+1}(z)-\sum_{k=0}^{N-1}\binom
Nk(-1)^{N-k}\Phi^{i+(p-1)k}(z)\bigg)\\
-\sum_{i=0}^{(p-1)N-1}b_i(z)\Phi^i(z)
=\Phi^{p-1}(z)-1,
\end{multline*}
Comparison of powers of $\Phi(z)$ then yields a system of equations
of the form
\begin{equation} \label{eq:Abc} 
M\cdot b=c,
\end{equation}
where $b=(b_i(z))_{0\le i\le (p-1)N-1}$ is the column vector of unknowns,
$c=(c_i)_{0\le i\le (p-1)N-1}$ 
with $c_0=-1$, $c_{p-1}=1$, and $c_i=0$ otherwise,
and $M$ is the $(p-1)N\times (p-1)N$ matrix given by
$$
M_{i,j}=\begin{cases} 
-1&\text{if }0\le i=j\le (p-1)N-p,\\
X&\text{if }p-1\le i=j+p-1\le (p-1)N-1,\\
Xz^{-1}&\text{if }1\le i=j-(p-1)(N-1)+1\le p-1,\\
(-1)^{N-k-1}\binom NkX&\text{if }0\le i-(p-1)k=j-(p-1)(N-1)\le p-2,\\
&\qquad \qquad \text{for some $k$ with }0\le k\le N-1,
\end{cases}
$$
$X$ being short for $1-(p-1)N$. The structure of the matrix $M$
becomes clearer if we reorder the rows and columns of the matrix
simultaneously so that first come the rows and columns indexed
by $i$ and $j$ which are $\equiv0$~(mod~$p-1$), respectively,
then those which are $\equiv1$~(mod~$p-1$), \dots, and finally
those which are $\equiv p-2$~(mod~$p-1$). The result is the
matrix
\begin{equation} \label{eq:M} 
\begin{pmatrix} 
A&0&0&\dots&0&C\\
B&A&0&\dots&0&0\\
0&B&A&\dots&0&0\\
\vdots&&\ddots&\ddots&&\vdots\\
0&\dots&0&B&A&0\\
0&\dots&0&0&B&A
\end{pmatrix},
\end{equation}
where the block $A$ is the $N\times N$ matrix given by
\begin{equation} \label{eq:A} 
A=
\begin{pmatrix} 
-1&0&0&\dots&0&(-1)^{N-1}X\\
X&-1&0&\dots&0&(-1)^{N-2}X\binom N1\\
0&X&-1&\dots&0&(-1)^{N-3}X\binom N2\\
\vdots&&\ddots&\ddots&&\vdots\\
0&\dots&0&X&-1&-X\binom N{N-2}\\
0&\dots&&0&X&X\binom N{N-1}-1\\
\end{pmatrix},
\end{equation}
$B$ is the $N\times N$ matrix given by
$$B=
\begin{pmatrix} 
0&\dots&0&Xz^{-1}\\
0&\dots&0&0\\
\vdots&&\vdots&\vdots\\
0&\dots&0&0\\
\end{pmatrix},
$$
and $C$ is the $N\times N$ matrix given by
$$C=
\begin{pmatrix} 
0&\dots&0&0\\
0&\dots&0&Xz^{-1}\\
0&\dots&0&0\\
\vdots&&\vdots&\vdots\\
0&\dots&0&0\\
\end{pmatrix}.
$$
The determinant of $M$ (that is, of the matrix in \eqref{eq:M}) 
is computed in Lemma~\ref{lem:M}.
It is obviously non-zero, therefore the system of linear
equations satisfied by the coefficients $b_i(z)$, $i=0,1,\dots,N-1$,
has a unique solution. In the end, we obtain \eqref{eq:Phi'z}.
\end{proof}

\begin{lemma} \label{lem:M}
The determinant of the matrix in \eqref{eq:M} equals
$$
(-1)^{(p-1)N}\big((p-1)N\big)^{(p-1)N}+
\big((p-1)N-1\big)^{(p-1)N-1}z^{-p+1}.$$
\end{lemma}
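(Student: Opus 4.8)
The plan is to compute the determinant of the block matrix in \eqref{eq:M} by a Laplace-type expansion exploiting the near-block-bidiagonal structure. Write the matrix as a $(p-1)\times(p-1)$ array of $N\times N$ blocks, with $A$ on the diagonal, $B$ on the subdiagonal, and the single off-diagonal block $C$ in the top-right corner. If $C$ were absent, the matrix would be block lower-triangular and the determinant would simply be $(\det A)^{p-1}$. The presence of $C$ contributes one additional term, by the standard fact that for a matrix of this ``cyclic bidiagonal'' shape the determinant equals $(\det A)^{p-1} \pm (\text{something involving } C \text{ and } B)$. Concretely, I would expand along the rows passing through the block $C$: the only two block-permutations of $\{1,\dots,p-1\}$ that give a nonzero contribution are the identity (yielding $(\det A)^{p-1}$) and the full cycle $1\mapsto 2\mapsto\cdots\mapsto(p-1)\mapsto 1$, which picks up $C$ once and $B$ exactly $p-2$ times. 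Since $B$ and $C$ each have a single nonzero entry $Xz^{-1}$ (with $X=1-(p-1)N$), these rank-one blocks force the second contribution to be a $\pm$ sign times $(Xz^{-1})^{p-1}$ times an appropriate minor of $A$ — in fact, since $B$ and $C$ are so sparse, the cyclic term collapses to a single monomial up to a minor of $A$ obtained by deleting the relevant row and column.

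The first main step is therefore to evaluate $\det A$, where $A$ is the $N\times N$ matrix in \eqref{eq:A}: an almost-bidiagonal matrix with $-1$ on the diagonal, $X$ on the subdiagonal, and a full last column with entries $(-1)^{N-1-j}X\binom Nj$ for $j<N-1$ and $X\binom N{N-1}-1$ in the corner. Expanding along the last column (or doing row operations to clear the subdiagonal) reduces this to a one-dimensional sum; I expect the answer to be a clean closed form, and this is exactly the content deferred to Lemma~\ref{lem:detA}. The second step is to identify the precise minor of $A$ that appears in the cyclic term: because $C$ has its nonzero entry in position $(2,N)$ of its block and $B$ has its nonzero entry in position $(1,N)$, tracing the cycle through the blocks shows that the surviving minor is (up to sign) the determinant of the $(N-1)\times(N-1)$ matrix obtained from $A$ by deleting one row and the last column — essentially a product of the subdiagonal entries, hence $X^{N-1}$ up to sign. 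Assembling: $\det M = (\det A)^{p-1} + (\pm 1)(Xz^{-1})^{p-1}(\pm X^{N-1})^{\,?}$, and the bookkeeping of the two sign factors (the sign of the $(p-1)$-cycle as a block permutation, and the sign incurred in pulling apart the minors) must be done carefully so that the final expression matches $(-1)^{(p-1)N}((p-1)N)^{(p-1)N}+((p-1)N-1)^{(p-1)N-1}z^{-p+1}$.

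The main obstacle will be the sign and exponent bookkeeping in the cyclic term — keeping straight the sign of the long block-cycle permutation, the signs hidden inside $\det A$ and inside the minor, and verifying that the powers of $X=1-(p-1)N$ combine to give exactly $((p-1)N-1)^{(p-1)N-1}$ (note $-X=(p-1)N-1$, so one needs $(p-1)+(N-1)(p-1)=(p-1)N$ factors of $-X$ in total, with the extra $-1$ from $X^{(p-1)N}$ versus $(-X)^{(p-1)N-1}$ absorbed into a sign). A convenient way to sidestep part of this is to first specialise $z$ (e.g. check the constant term and the $z^{-p+1}$ term separately, since the determinant is manifestly a polynomial in $z^{-1}$ of degree $p-1$ with only those two powers appearing, because $B$ and $C$ each carry exactly one factor $z^{-1}$ and the identity permutation carries none while the full cycle carries $p-1$), and then pin down the two coefficients by the $\det A$ evaluation and a single minor computation. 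I would also double-check the small case $N=1$ (where $A$ is the $1\times 1$ matrix $(X-1)=(-( p-1))$ and the formula must reduce to $(-(p-1))^{p-1}+(p-2)^{p-2}z^{-p+1}$, wait — here $(p-1)N-1=p-2$) to calibrate all signs before writing the general argument.
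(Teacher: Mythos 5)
Your plan is correct and follows essentially the same route as the paper: the paper isolates the single nonzero entry of $C$ by writing the last column as a sum of two columns and using multilinearity, which yields exactly your two contributions --- the block-lower-triangular term $(\det A)^{p-1}$ and a single monomial $\big((p-1)N-1\big)^{(p-1)N-1}z^{-p+1}$ coming from an expansion that threads through $C$ and the $p-2$ copies of $B$ --- with $\det A=(-1)^N\big((p-1)N\big)^N$ evaluated in a separate lemma, just as you propose. Your observations that the determinant contains only the powers $z^{0}$ and $z^{-(p-1)}$ and that the $N=1$ case calibrates the signs are sound consistency checks, and the sign bookkeeping you defer is handled in the paper by noting that the relevant reduced matrix is upper triangular.
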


\begin{proof}
We write the last column of \eqref{eq:M} 
as the sum $c_1+c_2$, where $c_1$ is
the column with $Xz^{-1}$ as index-1 entry 
(the reader should remember that our
indexing starts with~$0$) and $0$'s otherwise, and $c_2$ 
is the ``rest," that is, the index~$i$ entry equals
$$(-1)^{N-(i-(p-2)N)-1)}\binom N{i-(p-2)N}X-\delta_{i,(p-1)N-1}$$ 
for $i=(p-2)N,(p-2)N+1,
\dots,(p-1)N-1$ and $0$'s otherwise.
Then, by linearity in the last column, the determinant $\det M$
equals the sum of 
\begin{equation} \label{eq:M1} 
\det\begin{pmatrix} 
A&0&0&\dots&0&0\\
B&A&0&\dots&0&0\\
0&B&A&\dots&0&0\\
\vdots&&\ddots&\ddots&&\vdots\\
0&\dots&0&B&A&0\\
0&\dots&0&0&B&A
\end{pmatrix},
\end{equation}
and the determinant of a second matrix, which arises from \eqref{eq:M}
by replacing the last column by $c_1$.
Since the matrix in \eqref{eq:M1} is a lower triangular block
matrix, its determinant equals
\begin{equation} \label{eq:detM1a}
\left(\det A\right)^{p-1}.
\end{equation}
We are going to evaluate the determinant of~$A$ in Lemma~\ref{lem:detA}.

In order to evaluate the determinant of the second matrix, we expand
it along the last column. This leads to the expression
\begin{equation} \label{eq:M2} 
(-1)^{(p-1)N}Xz^{-1}
\det\begin{pmatrix} 
A'&0&0&\dots&0&0\\
B&A&0&\dots&0&0\\
0&B&A&\dots&0&0\\
\vdots&&\ddots&\ddots&&\vdots\\
0&\dots&0&B&A&0\\
0&\dots&0&0&B&A''
\end{pmatrix},
\end{equation}
where $A'$ is the matrix which arises from $A$ by deleting
its row with index~$1$ (it should be remembered again that our
indexing of rows starts with the index~$0$), and $A''$ is the
matrix which arises from $A$ be deleting its last column.
Inspection of the matrix in \eqref{eq:M2} reveals that it is
an upper (sic!) triangular matrix, hence its determinant equals the
product of its diagonal entries, so that \eqref{eq:M2} equals
\begin{equation*}
(-1)^{(p-1)N}Xz^{-1}\left(Xz^{-1}\right)^{p-2}(-1)X^{(p-1)(N-1)-1}
=
\big((p-1)N-1\big)^{(p-1)N-1}z^{-p+1}.
\hfill\qedhere
\end{equation*}
\end{proof}

\begin{lemma} \label{lem:detA}
With the matrix $A$ given by \eqref{eq:A}, We have
$$
\det A=(-1)^{N}\big((p-1)N\big)^N.
$$
\end{lemma}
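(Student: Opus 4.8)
The plan is to evaluate $\det A$ by cofactor expansion along the last column, exploiting the fact that, apart from the last column, $A$ is bidiagonal: it has $-1$ on the diagonal and $X$ on the subdiagonal (in positions $(i,j)$ with $i=j+1$). Write $X=1-(p-1)N$ throughout. Expanding along the last column, the minor obtained by deleting row $i$ and the last column splits into a block upper-triangular piece (rows $0,\dots,i-1$, which form an $i\times i$ lower-triangular matrix with $-1$'s on the diagonal, contributing $(-1)^i$) and a block lower-triangular piece (rows $i+1,\dots,N-1$ against columns $i,\dots,N-2$, which is an $(N-1-i)\times(N-1-i)$ matrix with $X$ on the diagonal, contributing $X^{N-1-i}$). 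Thus the cofactor of the $(i,N-1)$ entry is (up to the sign $(-1)^{i+(N-1)}$) equal to $(-1)^i X^{N-1-i}$.

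Next I would collect these contributions. The $(i,N-1)$ entry of $A$ is $(-1)^{N-1-i}X\binom Ni$ for $0\le i\le N-2$, and $X\binom N{N-1}-1$ for $i=N-1$; note the uniform expression $(-1)^{N-1-i}X\binom Ni$ actually also covers $i=N-1$ up to the extra $-1$. Carrying out the cofactor expansion gives
\begin{equation*}
\det A=\sum_{i=0}^{N-1}(-1)^{i+N-1}\bigl((-1)^{N-1-i}X\tbinom Ni\bigr)(-1)^i X^{N-1-i}
\;+\;(-1)^{(N-1)+(N-1)}(-1)\,(-1)^{N-1}.
\end{equation*}
The first sum simplifies: the sign $(-1)^{i+N-1}\cdot(-1)^{N-1-i}\cdot(-1)^i=(-1)^i$, so it equals $\sum_{i=0}^{N-1}(-1)^i\binom Ni X^{N-i}=X\sum_{i=0}^{N-1}(-1)^i\binom Ni X^{N-1-i}$. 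By the binomial theorem, $\sum_{i=0}^{N}(-1)^i\binom Ni X^{N-i}=(X-1)^N$, so the partial sum up to $i=N-1$ equals $(X-1)^N-(-1)^N$. The leftover term from $i=N-1$ is $-(-1)^{N-1}=(-1)^N$, which exactly cancels the $-(-1)^N$. Hence $\det A=(X-1)^N$, and since $X-1=-(p-1)N$ we get $\det A=(-1)^N\bigl((p-1)N\bigr)^N$, as claimed.

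The only place requiring care is bookkeeping of signs in the cofactor expansion and confirming that the two block triangular minors have the stated form; I expect this to be the main (though routine) obstacle. An alternative, possibly cleaner, route is to add $X$ times column $j$ to column $j+1$ successively from $j=0$ to $j=N-2$ in order to clear the subdiagonal, thereby reducing $A$ to an upper-triangular matrix whose last column entries telescope; its determinant is then the product of the diagonal, which again yields $(X-1)^N$. I would present whichever of the two computations is shorter to typeset; both avoid any genuine difficulty.
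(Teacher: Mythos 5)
Your proof is correct: the cofactor expansion along the last column, with the $(i,N-1)$ minor equal to $(-1)^iX^{N-1-i}$ by the block-bidiagonal structure, the uniform description $(-1)^{N-1-i}X\binom Ni$ of the last column up to the extra $-1$ at $i=N-1$, and the binomial theorem yielding $(X-1)^N=(-1)^N\big((p-1)N\big)^N$, all check out. This is essentially the same computation as the paper's, which instead replaces row $0$ by $\sum_{j=0}^{N-1}X^{-j}\cdot(\text{row }j)$ to annihilate all entries of that row except the last (whose value $-X^{-N+1}(1-X)^N$ is again the binomial theorem) and then reads off the determinant from the resulting triangular structure; the two routes are equivalent in content.
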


\begin{proof}
We replace the $0$-th row of $A$ by
$$
\sum_{j=0}^{N-1}X^{-j}\cdot\text{(row $j$)}.
$$
This is an operation which does not change the determinant.
For the entry in the $0$-th row and $(N-1)$-st column of the
new matrix, we obtain
\begin{align}
\notag
\sum_{j=0}^{N-1}X^{-j}(-1)^{N-j-1}\binom NjX-X^{-N+1}
&=-X^{-N+1}\sum_{j=0}^{N}X^{N-j}(-1)^{N-j}\binom Nj\\
&=-X^{-N+1}(1-X)^N=-\frac {\big((p-1)N\big)^N} {X^{N-1}}.
\label{eq:Y}
\end{align}
Thus, after the operation described above, the new matrix reads
$$
\begin{pmatrix} 
0&0&0&\dots&0&Y\\
X&-1&0&\dots&0&(-1)^{N-2}X\binom N1\\
0&X&-1&\dots&0&(-1)^{N-3}X\binom N2\\
\vdots&&\ddots&\ddots&&\vdots\\
0&\dots&0&X&-1&-X\binom N{N-2}\\
0&\dots&&0&X&X\binom N{N-1}-1\\
\end{pmatrix},
$$
where $Y$ denotes the quantity in \eqref{eq:Y}.
The determinant of this matrix, and thus the determinant of~$A$, equals
$$
(-1)^{N-1}X^{N-1}Y=(-1)^{N}\big((p-1)N\big)^N,
$$
establishing the claim.
\end{proof}

\section{A generating function approach}
\label{sec:GF}

Given a finitely generated virtually free group $\Gamma$,
in this section we write $F(z)$ for the generating function
$F_\Gamma(z)=\sum_{\lambda=1}^\infty f_\lambda(\Gamma)\,z^\lambda$ 
of the number $f_\lambda(\Gamma)$ of subgroups of index
$m_\Gamma\lambda$ in~$\Gamma$. The theorem below shows that,
under the conditions of Theorem~\ref{thm:mup=0}, the series
$F(z)$, when coefficients are reduced 
modulo any given $p$-power,
can be expressed as a polynomial in $\Phi(z)$.

\begin{theorem} \label{thm:1}
Let $p$ be a prime and $\alpha$ a positive integer. Furthermore, let
$\Gamma$ be a finitely generated virtually free group with
$\mu_p(\Gamma)=0$ and $\mu(\Gamma)\ge2$. As before, let
$\Phi(z)$ be the series in \eqref{eq:Phi}.
Then the generating function 
$F(z)$ for the free subgroup numbers
of\/~$\Gamma$, when reduced modulo $p^\alpha,$ 
can be expressed as a polynomial in $\Phi(z)$ of degree at most
$\mu(\Gamma)-1,$ with coefficients in
$\Z[z,z^{-1},Y^{-1}(z)]$, where
$$
Y(z)=\begin{cases} 
z^{p-1}-\big(\frac {\mu(\Gamma)} {p-1}+1\big)^{-1},&
\text{if }p\ge3\text{ and }\mu(\Gamma)\not\equiv0,1~(\text{\em
  mod}~p),\\
1,&\text{otherwise}.
\end{cases}
$$
\end{theorem}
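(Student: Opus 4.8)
The plan is to combine the functional equation for $F(z)$ implied by \eqref{Eq:GFTransform} and \eqref{Eq:G(z)Diff} with the exact functional equation \eqref{eq:PhiGl} for $\Phi(z)$, together with the differentiation rule of Lemma~\ref{lem:Phi'}. First I would recall that $G_\Gamma(z)$ satisfies the homogeneous linear differential equation \eqref{Eq:G(z)Diff} of order $\mu(\Gamma)$, and that $F(z)=m_\Gamma z\,(\log G_\Gamma(z))'$, so that $G_\Gamma(z)=\exp\bigl(m_\Gamma^{-1}\int z^{-1}F(z)\,dz\bigr)$. Substituting this into \eqref{Eq:G(z)Diff} and clearing the exponential, one obtains a nonlinear differential equation for $F(z)$ alone — a polynomial relation in $z$, $F(z)$, $F'(z),\dots,F^{(\mu(\Gamma)-1)}(z)$ with integer coefficients (the coefficients being exactly the $\theta_\mu(\Gamma)$ of \eqref{Eq:G(z)DiffCoeffs}). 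The leading behaviour is governed by the term $(\theta_1 z-m_\Gamma)G'$, which on division by $G$ produces the ``$-m_\Gamma$'' that eventually lets us solve recursively for the coefficients of $F$.

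The heart of the argument is then an induction on $\alpha$. For $\alpha=1$, the congruence \eqref{Eq:fModp} from \cite{MuPuFree} says precisely that $F(z)\equiv\Phi(z)\pmod p$, which is a polynomial in $\Phi(z)$ of degree $1\le\mu(\Gamma)-1$ (using $\mu(\Gamma)\ge2$) with coefficients in $\Z$. For the inductive step, suppose $F(z)\equiv P_{\alpha-1}(z,\Phi(z))\pmod{p^{\alpha-1}}$ with $P_{\alpha-1}$ a polynomial of degree $\le\mu(\Gamma)-1$ in its second argument and coefficients in $\Z[z,z^{-1},Y^{-1}(z)]$. Write $F(z)=P_{\alpha-1}(z,\Phi(z))+p^{\alpha-1}R(z)$ and substitute into the nonlinear ODE for $F$. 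Modulo $p^\alpha$ the terms quadratic and higher in $p^{\alpha-1}R$ drop out, so one gets a {\it linear} (in fact first-order, after using the ODE structure) differential equation for $R(z)$ modulo $p$, whose inhomogeneous part is $-p^{-(\alpha-1)}$ times the ``defect'' of $P_{\alpha-1}$ in the ODE (an integer power series by the inductive hypothesis). Using Lemma~\ref{lem:Phi'} repeatedly, every derivative $\Phi^{(j)}(z)$ occurring after substitution of $\Phi$ into the ODE is a polynomial in $\Phi(z)$ divided by a power of the quantity in the denominator of \eqref{eq:Phi'z}; that denominator is, up to an invertible monomial and the constant $\bigl((p-1)N-1\bigr)^{(p-1)N-1}$, precisely a unit times $Y(z)$ in the first case of the definition of $Y$, and a $p$-adic unit times a Laurent polynomial when $\mu(\Gamma)\equiv0,1\pmod p$ — this is exactly where the case distinction in the statement of $Y(z)$ comes from, and where Lemma~\ref{lem:M}'s explicit evaluation is used to identify the denominator. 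Solving the first-order linear ODE for $R$ modulo $p$ and re-expressing everything as a polynomial in $\Phi(z)$ via \eqref{eq:PhiGl} (which reduces any power $\Phi^{(p-1)N}$ to lower powers), one obtains $R(z)\equiv Q(z,\Phi(z))\pmod p$ with $\deg Q\le\mu(\Gamma)-1$ and coefficients in $\Z[z,z^{-1},Y^{-1}(z)]$; then $P_\alpha:=P_{\alpha-1}+p^{\alpha-1}Q$ works.

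Two points need care. The first is why the degree bound $\mu(\Gamma)-1$ is preserved: this follows because \eqref{eq:PhiGl} expresses $\Phi^{p-1}(z)=1+z^{-1}\Phi(z)$-type reductions... more precisely $\Phi^{(p-1)N}=z^{-1}\Phi\cdot(\text{lower})$ via $\Phi=z(\Phi^{p-1}-1)^N$, so $\Phi^{(p-1)N}$ and hence every higher power reduces to a $\Z[z,z^{-1}]$-combination of $1,\Phi,\dots,\Phi^{(p-1)N-1}$; but in fact the relevant combinations that arise never exceed degree $\mu(\Gamma)-1=(p-1)N-1$ after reduction because the nonlinear ODE for $F$ has order $\mu(\Gamma)-1$ and the substitution is degree-controlled — this should be checked by tracking the highest power of $\Phi$ through the substitution of Lemma~\ref{lem:Phi'} into each $\theta_\mu z^\mu G^{(\mu)}/G$ term. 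The second, and I expect the main obstacle, is the solvability of the first-order linear ODE $R'(z)+a(z)R(z)\equiv b(z)\pmod p$ for a power-series $R$ with the required coefficient ring: one must show $a(z)$ and $b(z)$ lie in $\Z[z,z^{-1},Y^{-1}(z)]\otimes\Z/p\Z$ after expressing the $\Phi$-derivatives via Lemma~\ref{lem:Phi'}, that the integrating factor stays in this ring (here $p\ge3$ versus $p=2$, and $\mu(\Gamma)\bmod p$, interact with whether $Y(z)=1$ suffices or whether the genuine $z^{p-1}-(\frac{\mu(\Gamma)}{p-1}+1)^{-1}$ must be inverted), and that the resulting antiderivative has no logarithmic obstruction modulo $p$ — i.e., the residue term is handled by the ``$-m_\Gamma$'' normalisation inherited from \eqref{Eq:G(z)Diff}, which guarantees the recursion for the Taylor coefficients of $R$ has invertible leading coefficient and hence a unique power-series solution. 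Verifying that this solution, a priori only a formal power series, actually lies in $\Z/p^\alpha\Z[z,z^{-1},Y^{-1}(z)]$ is the delicate algebraic bookkeeping at the core of the proof.
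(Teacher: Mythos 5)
Your overall architecture (base case $F\equiv\Phi\pmod p$ via \eqref{Eq:fModp} and \eqref{eq:PhiGl}, then induction on the exponent of $p$, with Lemma~\ref{lem:Phi'} controlling derivatives of $\Phi$ and the determinant of Lemma~\ref{lem:M} producing the denominator $Y(z)$) matches the paper. But the inductive step as you describe it has a genuine gap, and it is precisely the step you yourself flag as ``the main obstacle.'' You write $F=P_{\alpha-1}+p^{\alpha-1}R$ with $R$ an undetermined power series, substitute into the raw nonlinear ODE obtained from \eqref{Eq:GFTransform} and \eqref{Eq:G(z)Diff}, and propose to solve the resulting linear differential equation for $R$ modulo $p$ by an integrating factor. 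This does not go through: in the raw ODE the derivative terms do \emph{not} carry a factor of $p$, so after linearisation the correction $R$ satisfies a genuine ODE modulo $p$ involving $R',\dots,R^{(\mu(\Gamma)-1)}$ (not first order), and even in the first-order case the integrating factor and the antiderivative generically leave $\Z[z,z^{-1},Y^{-1}(z)]$. Moreover, a power-series solution of such an equation has no reason to be a polynomial in $\Phi(z)$ of degree at most $\mu(\Gamma)-1$ with coefficients in that ring; you assert that one can ``re-express everything'' via \eqref{eq:PhiGl}, but \eqref{eq:PhiGl} only reduces high powers of $\Phi$ — it cannot convert an arbitrary power series into a $\Phi$-polynomial.

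The paper closes the induction by two moves you are missing. First, it does not work with the raw ODE but with the result of a prior $p$-adic analysis (\cite[Prop.~2]{MuPuFree}, Equation \eqref{eq:Xeq} in the paper): the equation for $F$ can be brought to the form
$F(z)=z\left(F^{p-1}(z)-1\right)^{\mu(\Gamma)/(p-1)}+p\,\mathcal P\left(z,F,F',\dots,F^{(\mu(\Gamma)-1)}\right)$
with $\mathcal P$ an integer polynomial. The explicit factor $p$ in front of every derivative term is essential: it annihilates the derivatives of the $p^\beta$-correction modulo $p^{\beta+1}$, so no differential equation for the correction ever has to be solved. Second, the correction is not taken to be an arbitrary series $R$ but is sought in the form $p^\beta\sum_{i=0}^{\mu(\Gamma)-1}b_{i,\beta+1}(z)\Phi^i(z)$ from the outset; substituting this Ansatz and comparing powers of $\Phi$ (after reduction by \eqref{eq:PhiGl}) yields a linear \emph{algebraic} system over $\Z/p\Z$ for the $b_{i,\beta+1}(z)$, whose coefficient matrix is exactly the matrix $M$ of Lemma~\ref{lem:M}; its determinant, reduced modulo $p$ as in \eqref{eq:Nennerp}, is invertible in $\Z[z,z^{-1},Y^{-1}(z)]$, which is where the case distinction in $Y(z)$ actually arises. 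Without the $p\,\mathcal P$ normal form and the polynomial-in-$\Phi$ Ansatz for the correction, your induction cannot be completed as written.
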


\begin{proof}
It is known from \cite[Prop.~2]{MuPuFree} 
that $F(z)$ satisfies a differential
equation of the form
\begin{equation} \label{eq:Xeq} 
F(z)=z\left(F^{p-1}(z)-1\right)^{\mu(\Gamma)/(p-1)}
+p\mathcal P\left(z,F(z),F'(z),\dots,F^{(\mu(\Gamma)-1)}(z)\right),
\end{equation}
where $\mathcal P(z,t_0,t_1,\dots,t_{\mu(\Gamma)-1})$ is a polynomial
in $z,t_0,t_1,\dots,t_{\mu(\Gamma)-1}$ over the integers.\break
(To be precise, this is the result of a careful $p$-adic analysis of
the differential equation arising from 
a combination of \eqref{Eq:GFTransform} and \eqref{Eq:G(z)Diff}.)
It is our goal
to express $F(z)$ modulo~$p^\al$ as a polynomial in $\Phi(z)$
with coefficients in~$\Z[z,z^{-1},Y^{-1}(z)]$. Since $\Phi(z)$
satisfies the functional equation \eqref{eq:PhiGl} with\break
$N=\mu(\Gamma)/(p-1)$, we have
$$
F(z)=\Phi(z)\quad \text{modulo }p.
$$
Here, given integral power series (or Laurent series) $f(z)$ and $g(z)$,
we write 
$$f(z)=g(z)~\text {modulo}~p^\ga$$ 
to mean that the coefficients
of $z^i$ in $f(z)$ and $g(z)$ agree modulo~$p^\ga$ for all $i$.

We now suppose that we have already found a polynomial
$$
F_\be(z)=\sum_{i=0}^{\mu(\Gamma)-1}a_{i,\be}(z)\Phi^i(z),
$$
with coefficients $a_{i,\be}(z)$ in $\Z[z,z^{-1},Y^{-1}(z)]$, so that
\begin{equation} \label{eq:Xbe} 
F(z)=F_\be(z)\quad \text{modulo }p^\be.
\end{equation}
We then make the Ansatz
\begin{equation} \label{eq:Ansatz} 
F(z)=F_{\be+1}(z)=F_\be(z)+p^\be\sum_{i=0}^{\mu(\Gamma)-1}
b_{i,\be+1}(z)\Phi^i(z)
\quad \quad 
\text{modulo }p^{\be+1},
\end{equation}
for certain, at this point undetermined, rational functions
$b_{i,\be+1}(z)$ over the integers. We substitute this Ansatz in the
differential equation \eqref{eq:Xeq} and reduce the
result modulo~$p^{\be+1}$, to obtain
\begin{multline} \label{eq:Fbe}
F_\be(z)+p^\be\sum_{i=0}^{\mu(\Gamma)-1}b_{i,\be+1}(z)\Phi^i(z)\\
-z\left(F_\be^{p-1}(z)
+(p-1)F_\be^{p-2}(z)p^\be\sum_{i=0}^{\mu(\Gamma)-1}
b_{i,\be+1}(z)\Phi^i(z)-1\right)^{\mu(\Gamma)/(p-1)}\\
-p\mathcal P\left(z,F_{\be+1}(z),F_{\be+1}'(z),\dots,
F_{\be+1}^{(\mu(\Gamma)-1)}(z)\right)=0
\quad \quad 
\text{modulo }p^{\be+1}.
\end{multline}
By Lemma~\ref{lem:Phi'}, we have
\begin{equation} \label{eq:Fj} 
F^{(j)}_{\be+1}(z)=F^{(j)}_\be(z)
+p^\be\sum_{i=0}^{\mu(\Gamma)-1}c_{i,\be+1;j}(z)\Phi^i(z)
\end{equation}
for all non-negative integers~$j$ and certain rational functions
$c_{i,\be+1;j}(z)$. It should be noted that the denominators of these
rational functions are powers of
\begin{equation} \label{eq:Nenner} 
(-1)^{(p-1)N}\big((p-1)N\big)^{(p-1)N}z^{p-1}+
\big((p-1)N-1\big)^{(p-1)N-1},
\end{equation}
where we wrote again $N=\mu(\Gamma)/(p-1)$ for short.
Since we are considering \eqref{eq:Fbe} modulo $p^{\beta+1}$,
and since the sum on the right-hand side of \eqref{eq:Fj} has
the prefactor $p^\be$, we may reduce these denominators modulo~$p$.
Explicitly, we have
\begin{multline} \label{eq:Nennerp}
(-1)^{(p-1)N}\big((p-1)N\big)^{(p-1)N}z^{p-1}+
\big((p-1)N-1\big)^{(p-1)N-1}\\
=\begin{cases} 
1\text{ (mod }p),&\text{if }p=2\text{ and }N\text{ is even,}\\
z\text{ (mod }p),&\text{if }p=2\text{ and }N\text{ is odd,}\\
-1\text{ (mod }p),&\text{if }p\ge3\text{ and }N\equiv0~\text{mod}~p\\
z^{p-1}\text{ (mod }p),
&\text{if }p\ge3\text{ and }N\equiv-1~\text{mod}~p\\
z^{p-1}-(N+1)^{-1}\text{ (mod }p),
&\text{if }p\ge3\text{ and }N\not\equiv0,1~\text{mod}~p\\
\end{cases}
\end{multline}
In all cases, the reciprocals of the polynomials on the right-hand
side of \eqref{eq:Nennerp} 
are elements of $\Z[z,z^{-1},Y^{-1}(z)]$.
(Here we use that $N\equiv-1$~(mod~$p$) and
$\mu(\Gamma)\equiv1$~(mod~$p$) are equivalent.) Hence, in our
computation, the coefficients
$c_{i,\be+1;j}(z)$ may be assumed to lie in $\Z[z,z^{-1},Y^{-1}(z)]$.

If relation \eqref{eq:Fj} is substituted in
\eqref{eq:Fbe}, then one sees that this congruence reduces to
\begin{multline*} 
F_\be(z)+p^\be\sum_{i=0}^{\mu(\Gamma)-1}b_{i,\be+1}(z)\Phi^i(z)\\
-z\left((F_\be^{p-1}(z)-1)^N
+p^\be(p-1)NF_\be^{p-2}(z)(F_\be^{p-1}(z)-1)^{N-1}
\sum_{i=0}^{\mu(\Gamma)-1}b_{i,\be+1}(z)\Phi^i(z)\right)\\
-p\mathcal P\left(z,F_{\be}(z),F_{\be}'(z),\dots,
F_{\be}^{(\mu(\Gamma)-1)}(z)\right)=0
\quad \quad 
\text{modulo }p^{\be+1}.
\end{multline*}
By definition of $F_\be(z)$, we may divide both sides by $p^\be$.
This leads to the congruence
\begin{multline*} 
G_\be(z)+\sum_{i=0}^{\mu(\Gamma)-1}b_{i,\be+1}(z)\Phi^i(z)\\
-(p-1)NzF_\be^{p-2}(z)(F_\be^{p-1}(z)-1)^{N-1}
\sum_{i=0}^{\mu(\Gamma)-1}b_{i,\be+1}(z)\Phi^i(z)
=0
\quad \quad 
\text{modulo }p,
\end{multline*}
for some explicitly given polynomial $G_\be(z)$ in $\Phi(z)$ with
coefficients in $\Z[z,z^{-1},Y^{-1}(z)]$.
By construction, we have
$$
F_{\be}(z)=\Phi(z)\quad \text{modulo }p.
$$
Using this in the above congruence, we arrive at
\begin{multline} \label{eq:bcong}
G_\be(z)+\left(1
-(p-1)Nz\Phi^{p-2}(z)(\Phi^{p-1}(z)-1)^{N-1}\right)
\sum_{i=0}^{\mu(\Gamma)-1}b_{i,\be+1}(z)\Phi^i(z)
=0\\
\quad \quad 
\text{modulo }p.
\end{multline}
By reducing ``high" powers of $\Phi(z)$ by means of \eqref{eq:PhiGl}
and subsequently 
comparing coefficients of powers of $\Phi(z)$, we obtain a system
of linear equations over $\Z/p\Z$ for the unknown rational functions
$b_{i,\be+1}(z)$, $i=0,1,\dots,\mu(\Gamma)-1$. As inspection shows, the
coefficient matrix of the system is exactly the same as the one
arising from \eqref{eq:bi}. 
(The reader should in particular compare \eqref{eq:bcong} and
\eqref{eq:Glsys}.)
We have computed the determinant
of this coefficient matrix in the proof of
Lemma~\ref{lem:Phi'}. As a matter of fact, it
equals
\eqref{eq:Nenner} divided by $z^{p-1}$. 
Since \eqref{eq:bcong} is a congruence modulo~$p$, we have to reduce
\eqref{eq:Nenner} modulo~$p$, which we did in \eqref{eq:Nennerp}.
We observed that the reciprocals of the reduced expressions lie in
$\Z[z,z^{-1},Y^{-1}(z)]$ in all cases. In particular, they are all non-zero.
Hence, there are
unique rational functions $b_{i,\be+1}(z)$, $i=0,1,\dots,\mu(\Gamma)-1$,
solving \eqref{eq:bcong}, and all of them are elements of
$\Z[z,z^{-1},Y^{-1}(z)]$. This completes the proof of the theorem.
\end{proof}

\section{The main results}
\label{sec:main}

Let again $\Gamma$ be a finitely generated virtually free group and
$p$ a prime such that $\mu_p(\Gamma)=0$ and
$\mu(\Gamma)\ge2$.
We are now in the position to derive the main results of this paper,
which say that the number of free subgroups 
of index $m_\Gamma\lambda$ in $\Gamma$,
when reduced modulo any given $p$-power, 
is congruent to a binomial coefficient involving~$\lambda$ times
a rational function in $\lambda$, respectively a sum involving these
quantities. These results are made precise in
Corollaries~\ref{cor:1} and \ref{cor:2} below. We accompany these
results by concrete examples, given in Example~\ref{ex:1} and
\ref{ex:2}, which illustrate Corollary~\ref{cor:1}, respectively
Example~\ref{ex:3}, illustrating Corollary~\ref{cor:2}.
Moreover, we explain in Remarks~\ref{rem:1} and \ref{rem:2} how
the earlier results in \cite{KKM,KM} fit into the more general
picture that we present here.

\begin{corollary} \label{cor:1}
Let $r$ be a non-negative integer.
With the assumptions of Theorem~{\em\ref{thm:1}}, if
$\mu(\Gamma)\equiv0,1$~{\em(mod}~$p${\em)}, then
\begin{equation} \label{eq:cong1} 
f_\lambda(\Gamma)
\equiv R_{\Gamma,p,r}(\lambda)\binom {\frac {\mu(\Gamma)\lambda}{p-1}} 
{\frac {\lambda-r}{p-1}}\pmod {p^\alpha},
\quad 
\text{for $\lambda\equiv r$~{\em(mod}~$p-1${\em)}},
\end{equation}
where, $R_{\Gamma,p,r}(\lambda)$ is a rational function
in~$\lambda$.
\end{corollary}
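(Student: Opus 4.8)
The goal is to extract the congruence \eqref{eq:cong1} from Theorem~\ref{thm:1}. Under the hypothesis $\mu(\Gamma)\equiv0,1\pmod p$ we have $Y(z)=1$ by definition, so Theorem~\ref{thm:1} tells us that, modulo $p^\alpha$,
\[
F(z)=\sum_{i=0}^{\mu(\Gamma)-1}a_i(z)\,\Phi^i(z),
\]
with each $a_i(z)\in\Z[z,z^{-1}]$ — that is, a Laurent polynomial. The first step is therefore to record this, set $N=\mu(\Gamma)/(p-1)$, and reduce the whole problem to understanding the coefficients of the powers $\Phi^i(z)$. The key fact here is that $\Phi(z)$ satisfies the exact functional equation \eqref{eq:PhiGl}, $\Phi(z)=z\bigl(\Phi^{p-1}(z)-1\bigr)^N$, so the Lagrange inversion formula gives a closed form for the coefficients of $\Phi^i(z)$: one has
\[
[z^n]\,\Phi^i(z)=\frac in\,[t^{\,n-i}]\bigl(1-t^{p-1}\bigr)^{Nn}
=\frac in(-1)^{\frac{n-i}{p-1}}\binom{Nn}{\frac{n-i}{p-1}},
\]
valid whenever $n\equiv i\pmod{p-1}$ (and $0$ otherwise). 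So the second step is: apply Lagrange inversion to each $\Phi^i(z)$ and assemble the contribution of the single term $a_i(z)=\sum_j \epsilon_{i,j}z^j$ to $[z^\lambda]F(z)$.

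The third step is the arithmetic of the binomial coefficients. A term $\epsilon_{i,j}z^j\Phi^i(z)$ contributes $\epsilon_{i,j}\cdot\frac{i}{\lambda-j}(-1)^{\frac{\lambda-j-i}{p-1}}\binom{N(\lambda-j)}{\frac{\lambda-j-i}{p-1}}$ to $f_\lambda(\Gamma)$ modulo $p^\alpha$. The task is to rewrite $\binom{N(\lambda-j)}{(\lambda-j-i)/(p-1)}$ as $\binom{N\lambda}{(\lambda-r)/(p-1)}$ times a rational function of $\lambda$. Since $j$ ranges over a \emph{fixed} finite set (determined by $\alpha$) and, on the congruence class $\lambda\equiv r\pmod{p-1}$, the relevant lower arguments differ by the fixed integer amount $(j+i-r)/(p-1)$ while the upper argument shifts by $Nj$, the ratio $\binom{N(\lambda-j)}{(\lambda-j-i)/(p-1)}\big/\binom{N\lambda}{(\lambda-r)/(p-1)}$ is a product of finitely many linear factors in $\lambda$ over finitely many linear factors in $\lambda$ — hence a rational function. (When $p=2$ this simplifies further since $p-1=1$ and there is only the class $r=0$ or $r=1$ to worry about; the general $p$ case needs the observation that $\binom{\mu(\Gamma)\lambda/(p-1)}{(\lambda-r)/(p-1)}$ is nonzero precisely when $\lambda\equiv r\pmod{p-1}$, which is why the statement is phrased per residue class $r$.) Summing the finitely many such contributions over $i$ and $j$, and pulling out the common factor $\binom{\mu(\Gamma)\lambda/(p-1)}{(\lambda-r)/(p-1)}$, yields \eqref{eq:cong1} with $R_{\Gamma,p,r}(\lambda)$ the resulting finite sum of rational functions, itself rational.

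\textbf{The main obstacle.} The conceptual content is entirely in Theorem~\ref{thm:1}; what remains is bookkeeping, but the bookkeeping has one genuinely delicate point: one must check that the ratio of binomial coefficients really is a rational function of $\lambda$ \emph{uniformly} in $\lambda$ on the residue class, including handling the cases where a lower argument is negative or non-integral (where the convention makes the binomial zero) consistently with the ``rational function'' formulation. The safe way is to fix the residue class $\lambda\equiv r\pmod{p-1}$ from the outset, substitute $\lambda=(p-1)\ell+r$, express every binomial coefficient appearing with argument an explicit integer-valued linear function of $\ell$, and then the ratios are manifestly rational in $\ell$, hence in $\lambda$; one then also notes that only those $i$ with $i\equiv r\pmod{p-1}$ and only finitely many $j$ (bounded in terms of $\alpha$ through the construction in the proof of Theorem~\ref{thm:1}) contribute, so the sum defining $R_{\Gamma,p,r}$ is finite. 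I expect no other difficulty; the degree bound $\mu(\Gamma)-1$ on the polynomial in $\Phi(z)$ is exactly what guarantees $R_{\Gamma,p,r}$ does not depend on $\alpha$ in its \emph{shape}, only in its coefficients.
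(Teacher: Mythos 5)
Your proposal is correct and follows essentially the same route as the paper: invoke Theorem~\ref{thm:1}, observe that $Y(z)=1$ so the coefficients $b_{i,\alpha}(z)$ are Laurent polynomials, and extract $[z^\lambda]$ via Lagrange inversion (the paper's \eqref{eq:Phim}), with the shifted binomial coefficients collected into a common factor times a rational function on each residue class $\lambda\equiv r\pmod{p-1}$. One small slip: the functional equation \eqref{eq:PhiGl} gives $[z^n]\Phi^i(z)=\frac{i}{n}[t^{\,n-i}]\bigl(t^{p-1}-1\bigr)^{Nn}$, not $\bigl(1-t^{p-1}\bigr)^{Nn}$, so the sign is $(-1)^{Nn-\frac{n-i}{p-1}}$ as in \eqref{eq:Phim}; this sign is only quasi-rational in $\lambda$, which is why the paper's examples carry prefactors such as $(-1)^{L+1}$.
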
 

\begin{proof}
By Theorem~\ref{thm:1}, the generating function
$\sum_{\lambda=1}^\infty f_\lambda(\Gamma)\,z^\lambda$ equals
\begin{equation} \label{eq:GF} 
\sum_{i=0}^{\mu(\Gamma)-1} b_{i,\alpha}(z)\Phi^i(z)
\quad \quad \text{modulo }p^\alpha,
\end{equation}
and the coefficients $b_{i,\alpha}(z)$
are elements of $\Z[z,z^{-1},Y^{-1}(z)]$. According to the definition
of $Y(z)$, under our assumption $\mu(\Gamma)\equiv0,1$ we have
$Y(z)=1$. Consequently, the coefficients $b_{i,\alpha}(z)$ are
actually Laurent polynomials over the integers.

We must now extract the coefficient of $z^\lambda$ in \eqref{eq:GF}.
In order to do so, we appeal again to the Lagrange inversion formula
(cf.\ \cite[Theorem~5.4.2]{StanBI}), which shows that
\begin{equation} \label{eq:Phim} 
\coef{z^n}\Phi^m(z)=(-1)^{\frac {(\mu(\Gamma)-1)n+m} {p-1}}
\frac {m} {n}\binom {\frac {\mu(\Gamma)n} {p-1}}{\frac {n-m} {p-1}}.
\end{equation}
If this is used to extract the coefficient of $z^\lambda$ in
\eqref{eq:GF} for $\lambda\equiv r$~(mod~$p-1$), then one
arrives at the assertion \eqref{eq:cong1}.
\end{proof}

\begin{example} \label{ex:1}
We let $p=3$ and $\Gamma_1$ a finitely generated virtually free group with 
order graph given by the normalised tree in Figure~\ref{fig:1}.
In this situation, we have $m_{\Gamma_1}=6$, 
$\zeta_1(\Gamma_1)=2$, $\zeta_2(\Gamma_1)=4$,
$\zeta_3(\Gamma_1)=0$, and $\zeta_6(\Gamma_1)=-1$, and thus
$$
\mu_3(\Gamma_1)
=1+\varphi\!\left(\tfrac {6} {3}\right)\zeta_3(\Gamma_1)
+\varphi\!\left(\tfrac {6} {6}\right)\zeta_6(\Gamma_1)
=1+0+(-1)=0
$$
and
$$
\mu(\Gamma_1)
=1+\varphi\!\left(\tfrac {6} {1}\right)\zeta_1(\Gamma_1)
+\varphi\!\left(\tfrac {6} {2}\right)\zeta_2(\Gamma_1)
+\varphi\!\left(\tfrac {6} {3}\right)\zeta_3(\Gamma_1)
+\varphi\!\left(\tfrac {6} {6}\right)\zeta_6(\Gamma_1)
=1+4+8+0-1=12.
$$

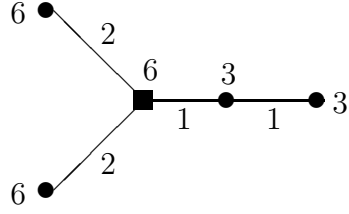
\begin{figure}
\unitlength.4cm
\begin{picture}(15,10)(-3,-1)
\put(4,5){\line(1,0){6}}
\put(4,5){\line(-1,1){3}}
\put(4,5){\line(-1,-1){3}}
\put(4,5){\raise0pt\hbox{\kern-4pt\large \vrule width7pt height3.5pt depth3.5pt}}
\put(1,8){\raise0pt\hbox{\kern-3pt\circle*{.5}}}
\put(1,2){\raise0pt\hbox{\kern-3pt\circle*{.5}}}
\put(7,5){\raise0pt\hbox{\kern-3pt\circle*{.5}}}
\put(10,5){\raise0pt\hbox{\kern-3pt\circle*{.5}}}
\put(3,3){\raise-5pt\hbox{\kern-5pt 2}}
\put(3,7.3){\raise-5pt\hbox{\kern-5pt 2}}
\put(5.5,4.5){\raise-5pt\hbox{\kern-5pt 1}}
\put(8.5,4.5){\raise-5pt\hbox{\kern-5pt 1}}
\put(10.7,5){\raise-5pt\hbox{\kern-5pt 3}}
\put(7,6){\raise-5pt\hbox{\kern-5pt 3}}
\put(4.4,6.1){\raise-5pt\hbox{\kern-5pt 6}}
\put(0,8){\raise-5pt\hbox{\kern-5pt 6}}
\put(0,2){\raise-5pt\hbox{\kern-5pt 6}}
\end{picture}
\vskip-.5cm
\caption{The order graph of the group $\Gamma_1$ in Example~\ref{ex:1}}
\label{fig:1}
\end{figure}
The functional equation for $F(z)=F_{\Gamma_1}(z)$ that we get from
\eqref{Eq:GFTransform} and \eqref{Eq:G(z)Diff}, after reduction of
the coefficients modulo~81, is
\begin{multline*}
63 z^3 F^9(z) F''(z)+72 z^3 F^3(z) F''(z)+27 z^3 F(z) F''(z)+72 z^2 
F^{10}(z) F'(z)\\
+36 z^2 F^9(z) F'(z)+9 z^2 F^4(z) F'(z)+18 z^2 F^3(z)
F'(z)+27 z^2 F^2(z) F'(z)\\
+27 z^2 F(z) F'(z)+54 z^2 F'(z)+z F^{12}(z) +36 z 
F^{11}(z) +15 z F^{10}(z) +72 z F^9(z) \\
+54 z F^8(z) +46 z F^6(z) +18 z
F^5(z) +21 
z F^4(z) +63 z F^3(z) \\
+9 z F^2(z) +54 z F(z)+80 F(z)+16 z=0
\quad \quad \text{modulo }81.
\end{multline*}
The algorithm given in the proof of Theorem~\ref{thm:1} to find a
solution to this congruence yields
\begin{multline*} 
F(z)=
15 z + (27 z+1) \Phi(z) + 69 z \Phi^2(z) + 9 z \Phi^3(z) + 42 z \Phi^4(z) + 
 27 z \Phi^5(z) \\
+ 39 z \Phi^6(z) + 27 z \Phi^7(z) + 66 z \Phi^8(z) + 
 72 z \Phi^9(z) + 12 z \Phi^{10}(z)
\quad \quad \text{modulo }81.
\end{multline*}
Coefficient extraction then yields
$$
f_{2L+1}(\Gamma_1)\equiv
(-1)^{L+1}
\frac {P_1(L)} {(12L+1)_6}
\binom {12L+6}L\pmod {81},
\quad \text{for }L\ge1,
$$
where
$$
P_1(L)=18(473007 L^5 + 969687 L^4 + 765456 L^3 + 308998 L^2 + 
     72732 L+9080),
$$
and
$$
f_{2L}(\Gamma_1)\equiv
(-1)^{L+1}
\frac {P_2(L)} {(12L-6)\, (11L-4)_4 }
\binom {12L-6}{L-1}\pmod {81},
\quad \text{for }L\ge1,
$$
where
$$
P_2(L)=324  (48 L^6 - 528 L^5 + 6079 L^2 + 9091 L^4- 10582 L^3  - 1874 L + 
    286),
$$
with the Pochhammer symbol 
$(\alpha)_m$ being defined by
$(\alpha)_m:=\alpha(\alpha+1)\cdots(\alpha+m-1)$ for 
$m\ge1$, and $(\alpha)_0:=1$.
\end{example}

\begin{example} \label{ex:2}
We let $p=2$ and $\Gamma_2$ a finitely generated virtually free group with 
order graph given by the normalised tree in Figure~\ref{fig:2}.
In this situation, we have $m_{\Gamma_2}=30$, 
$\zeta_1(\Gamma_2)=0$, 
$\zeta_2(\Gamma_2)=0$,
$\zeta_3(\Gamma_2)=3$, 
$\zeta_5(\Gamma_2)=1$, 
$\zeta_6(\Gamma_2)=0$, 
$\zeta_{10}(\Gamma_2)=0$, 
$\zeta_{15}(\Gamma_2)=5$, 
and $\zeta_{30}(\Gamma_2)=-1$, and thus
\begin{align*}
\mu_2(\Gamma_2)&=1
+\varphi\!\left(\tfrac {30} {2}\right)\zeta_2(\Gamma_2)
+\varphi\!\left(\tfrac {30} {6}\right)\zeta_6(\Gamma_2)
+\varphi\!\left(\tfrac {30} {10}\right)\zeta_{10}(\Gamma_2)
+\varphi\!\left(\tfrac {30} {30}\right)\zeta_{30}(\Gamma_2)\\
&=1+0+0+(-1)=0
\end{align*}
and
\begin{align*}
\mu(\Gamma_2)&=1
+\varphi\!\left(\tfrac {30} {1}\right)\zeta_1(\Gamma_2)
+\varphi\!\left(\tfrac {30} {2}\right)\zeta_2(\Gamma_2)
+\varphi\!\left(\tfrac {30} {3}\right)\zeta_3(\Gamma_2)
+\varphi\!\left(\tfrac {30} {5}\right)\zeta_5(\Gamma_2)
+\varphi\!\left(\tfrac {30} {6}\right)\zeta_6(\Gamma_2)\\
&\kern2cm
+\varphi\!\left(\tfrac {30} {10}\right)\zeta_{10}(\Gamma_2)
+\varphi\!\left(\tfrac {30} {15}\right)\zeta_{15}(\Gamma_2)
+\varphi\!\left(\tfrac {30} {30}\right)\zeta_{30}(\Gamma_2)\\
&=1+0+0+12+2+0+0+5+(-1)=19.
\end{align*}

\begin{figure}
\unitlength.4cm
\begin{picture}(15,10)(-3,-1)
\put(7,5){\line(-1,0){6}}
\put(4,5){\line(-1,1){3}}
\put(4,5){\line(-1,-1){3}}
\put(7,5){\line(1,0){3}}
\put(7,5){\raise0pt\hbox{\kern-4pt\large \vrule width7pt height3.5pt depth3.5pt}}
\put(1,8){\raise0pt\hbox{\kern-3pt\circle*{.5}}}
\put(1,2){\raise0pt\hbox{\kern-3pt\circle*{.5}}}
\put(10,5){\raise0pt\hbox{\kern-3pt\circle*{.5}}}
\put(4,5){\raise0pt\hbox{\kern-3pt\circle*{.5}}}
\put(1,5){\raise0pt\hbox{\kern-3pt\circle*{.5}}}
\put(3,3){\raise-5pt\hbox{\kern-5pt 3}}
\put(3,7.3){\raise-5pt\hbox{\kern-5pt 5}}
\put(5.5,4.5){\raise-5pt\hbox{\kern-5pt 15}}
\put(2.5,4.5){\raise-5pt\hbox{\kern-5pt 3}}
\put(8.5,4.5){\raise-5pt\hbox{\kern-5pt 3}}
\put(10.7,5){\raise-5pt\hbox{\kern-5pt 6}}
\put(7,6){\raise-5pt\hbox{\kern-5pt 30}}
\put(4.4,6.1){\raise-5pt\hbox{\kern-8pt 30}}
\put(0,8){\raise-5pt\hbox{\kern-10pt 10}}
\put(0,5){\raise-5pt\hbox{\kern-5pt 6}}
\put(0,2){\raise-5pt\hbox{\kern-5pt 6}}
\end{picture}
\vskip-.5cm
\caption{The order graph of the group $\Gamma_2$ in Example~\ref{ex:2}}
\label{fig:2}
\end{figure}
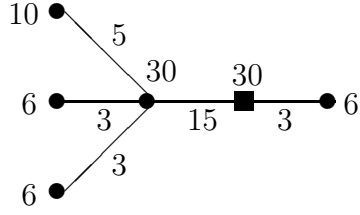
The functional equation for $F(z)=F_{\Gamma_2}(z)$ that we get from
\eqref{Eq:GFTransform} and \eqref{Eq:G(z)Diff}, after reduction of
the coefficients modulo~16, is
\begin{multline*}
4 z^3 F^{16}(z) F''(z)+8 z^3 F^8(z) F''(z)+4 z^3 F''(z)+10 z^2 F^{17}(z) 
F'(z)+10 z^2 F^{16}(z) F'(z)\\
+8 z^2 F^{13}(z) F'(z)+8 z^2 F^{12}(z) F'(z)+12 
z^2 F^9(z) F'(z)+12 z^2 F^8(z) F'(z)\\
+8 z^2 F^5(z) F'(z)+8 z^2 F^4(z) 
F'(z)+10 z^2 F(z) F'(z)+10 z^2 F'(z)+z F^{19}(z)\\
 +13 z F^{18}(z) +11 z 
F^{17}(z) +7 z F^{16}(z) +4 z F^{15}(z) +4 z F^{14}(z) +12 z F^{13}(z)\\
+12 z 
F^{12}(z) +14 z F^{11}(z) +6 z F^{10}(z) +10 z F^9(z) +2 z F^8(z) +4 z
F^7(z) +4 z 
F^6(z) \\+12 z F^5(z) +12 z F^4(z) +9 z F^3(z) +5 z F^2(z) +3 z F(z)+15 F(z)+15 z
=0
\quad \quad \text{modulo }16.
\end{multline*}
The algorithm given in the proof of Theorem~\ref{thm:1} to find a
solution to this congruence yields
\begin{multline*}
F(z)=4 z + 5 \Phi(z) + (12 z+2) \Phi^2(z) + 8 \Phi^3(z) + 12 \Phi^4(z) +
        8 \Phi^5(z) +  
 8 z \Phi^8(z) \\
+ 8 z \Phi^{10}(z) + 4 z \Phi^{16}(z) + 12 z \Phi^{18}(z)
\quad \quad \text{modulo }16.
\end{multline*}
Coefficient extraction then yields
\begin{multline*}
f_{\lambda}(\Gamma_2)\\
\equiv \frac {Q(\lambda)} 
{3 \lambda (6 \lambda+1) (9 \lambda+1) (9 \lambda+2) (18 \lambda+5) 
(19 \lambda-18)_{19} }
\binom {19\lambda}{\lambda-1}
\pmod{16},\\\text{for }\lambda\ge2,
\end{multline*}
where
\begin{multline*}
Q(\lambda)=41487381613117440000 + 1687131469740810240000 \lambda \\
+ 
     11694465019743123456000 \lambda^2 + 292824544319204134118400
     \lambda^3 \\
- 
     2920284679646876757433344 \lambda^4 + 29139678526675320716647104
     \lambda^5 \\
- 
     208744430518331785363075776 \lambda^6 + 
     1109655351908161743775529040 \lambda^7 \\
- 
     4529445293042933659974133664 \lambda^8 + 
     13823323659414730061860809764 \lambda^9 \\- 
     27457006500072077685531953836 \lambda^{10} + 
     13774006864417015570820956495 \lambda^{11} \\
+ 
     106285230034124606189268827556 \lambda^{12} - 
     297352958635465036740864629691 \lambda^{13} \\
- 
     141581261268484414672371284786 \lambda^{14} + 
     3042215815187103665497014434600 \lambda^{15}\\
 - 
     10200061275321550038724683325744 \lambda^{16} 
+ 
     20246947276823841509192253805174 \lambda^{17} \\
- 
     27403542237122957637017406285816 \lambda^{18} + 
     26128885491619758888717502991655 \lambda^{19} \\
- 
     17392298204833244937049876124804 \lambda^{20} + 
     7727636538613299232368005827649 \lambda^{21} \\
- 
     2065181275328822431645181305786 \lambda^{22} + 
     251508577253835734501825269810 \lambda^{23}.
\end{multline*}

\end{example}

\begin{remark} \label{rem:1}
More generally, if $p=2$, $\mu_2(\Gamma)=0$ and $\mu(\Gamma)\ge2$,
then we are always in the case covered by Corollary~\ref{cor:1},
since, trivially, $\mu(\Gamma)\equiv0$~(mod~$2$) or 
$\mu(\Gamma)\equiv1$~(mod~$2$). In particular, we see that
the discussion of the subgroup numbers of lifts of Hecke groups
$\mathfrak{H}(q) \cong C_2 \ast C_q$ with $q$ a Fermat
prime modulo powers of~$2$ in \cite[Sec.~8 and second part of Sec.~13]{KKM}
fits into the framework of Corollary~\ref{cor:1}, which can
be regarded as a vast generalisation. It has to be emphasised
yet that the results for lifts of Hecke groups in \cite{KKM} 
go slightly further than Corollary~\ref{cor:1} in that case
as the basic series used there --- which is
the mod-2-reduction of our basic series $\Phi(z)$ --- allows
for a very efficient coefficient extraction, a point that we
did not touch in the present paper.
\end{remark}

Now we turn to the somewhat more complicated case when
$\mu(\Gamma)\not\equiv0,1$~(mod~$p$). 
\begin{corollary} \label{cor:2}
Let $r$ be an integer with $0\le r\le p-2$.
With the assumptions of Theorem~{\em\ref{thm:1}}, if
$\mu(\Gamma)\not\equiv0,1$~{\em(mod}~$p${\em)}, then
\begin{multline} \label{eq:cong2} 
f_\lambda(\Gamma)
\equiv 
\left(\frac {\mu(\Gamma)} {p-1}+1\right)^{\lambda/(p-1)}
R^{(1)}_{\Gamma,p,r}(\lambda)
\\
+\sum_{k=0}^{\fl{\lambda/(p-1)}}
\left(\frac {\mu(\Gamma)} {p-1}+1\right)^kR^{(2)}_{\Gamma,p,r}(\lambda,k)
\binom {\frac {\mu(\Gamma)\lambda}{p-1}-\mu(\Gamma)k} 
{\frac {\lambda-r}{p-1}-k}\pmod {p^\alpha},
\\
\text{for $\lambda\equiv r$~{\em(mod}~$p-1${\em)}},
\end{multline}
where $R^{(1)}_{\Gamma,p,r}(\lambda)$ and
$R^{(2)}_{\Gamma,p,r}(\lambda,k)$ are rational functions in their
respective arguments.
Moreover, $R^{(2)}_{\Gamma,p,r}(\lambda,k)$ depends only on
$\frac {\lambda} {p-1}-k$.
\end{corollary}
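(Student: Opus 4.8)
The plan is to start from the representation of $F(z)$ modulo $p^\alpha$ provided by Theorem~\ref{thm:1} and to extract the coefficient of $z^\lambda$ directly. Fix $r$ with $0\le r\le p-2$ and write $N=\mu(\Gamma)/(p-1)$ and $c=N+1=\frac{\mu(\Gamma)}{p-1}+1$; as $\mu(\Gamma)\not\equiv1\pmod p$, the integer $c$ is a unit modulo $p^\alpha$, and Theorem~\ref{thm:1} gives
\[
F(z)\equiv\sum_{i=0}^{\mu(\Gamma)-1}b_i(z)\,\Phi(z)^i\pmod{p^\alpha},\qquad b_i(z)\in\Z[z,z^{-1},Y^{-1}(z)],\ \ Y(z)=z^{p-1}-c^{-1}.
\]
Since $Y(z)^{-1}=-c(1-cz^{p-1})^{-1}=-c\sum_{k\ge0}c^kz^{(p-1)k}$, each $b_i(z)$ is a Laurent polynomial over $\Z/p^\alpha\Z$ plus a finite combination of terms $z^a(1-cz^{p-1})^{-e}$, so that for $a$ in a fixed residue class modulo $p-1$ and $a$ large, $\coef{z^a}b_i(z)$ equals $c^{a/(p-1)}$ times a rational function of $a$. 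This is the source of the powers of $c$ in \eqref{eq:cong2}.

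For $\lambda\equiv r\pmod{p-1}$ I would then expand $f_\lambda(\Gamma)\equiv\sum_i\sum_a\coef{z^a}b_i(z)\cdot\coef{z^{\lambda-a}}\Phi^i(z)$ and use the Lagrange inversion formula~\eqref{eq:Phim}: the inner coefficient vanishes unless $a\equiv r-i\pmod{p-1}$, in which case, with $t=\frac{\lambda-a-i}{p-1}$, it equals $\pm\frac{i}{\lambda-a}\binom{\mu(\Gamma)t+Ni}{t}$. The term $i=0$ contributes $\coef{z^\lambda}b_0(z)$, which is $c^{\lambda/(p-1)}$ times a rational function of $\lambda$ for $\lambda$ large --- the first summand of \eqref{eq:cong2}. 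The terms $i\ge1$ give the binomial sum, but with the ``wrong'' offset $Ni$ in the binomial coefficient in place of $Nr$. To fix this I would exploit that $\binom{\mu(\Gamma)t+Ni}{t}$ and $\binom{\mu(\Gamma)t+Nr}{t}$ are hypergeometric in $t$ with rational ratio, so that $\binom{\mu(\Gamma)t+Ni}{t}=\sum_{t'=0}^{t}\rho_i(t')\binom{\mu(\Gamma)t'+Nr}{t'}$ for a rational function $\rho_i$ (obtained by telescoping: divide the first difference of the left side by $\binom{\mu(\Gamma)t'+Nr}{t'}$, the boundary term at $t'=-1$ being zero). After substituting this, reindexing by $k=\frac{\lambda-r}{p-1}-t'$, and using $\binom{\mu(\Gamma)t'+Nr}{t'}\big|_{t'=\frac{\lambda-r}{p-1}-k}=\binom{\frac{\mu(\Gamma)\lambda}{p-1}-\mu(\Gamma)k}{\frac{\lambda-r}{p-1}-k}$, every surviving $t$- or $t'$-dependent factor turns into a rational function of $\frac{\lambda}{p-1}-k$, which is exactly the shape $c^kR^{(2)}_{\Gamma,p,r}(\frac{\lambda}{p-1}-k)$ in \eqref{eq:cong2}; the additional pure powers $c^{\lambda/(p-1)}$ times rational functions of $\lambda$ shed when the finite-support $a$-summation is interchanged with the $k$-summation are absorbed into $R^{(1)}_{\Gamma,p,r}(\lambda)$.

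The hard part is the final reorganization. The $c^k$-factors fall out for free from the geometric series of $Y(z)^{-1}$, and the passage to the single offset $r$ is a routine telescoping argument; the delicate point is to keep careful track of which contributions remain genuine $c^k$-weighted binomial terms and which collapse into pure powers $c^{\lambda/(p-1)}$. Because the range of the inner $a$-summation depends on $k$, interchanging the two summations produces a partial sum of a $c$-weighted hypergeometric sequence, and one must check that its non-telescoping part has the asserted form and, in particular, that the resulting $R^{(2)}_{\Gamma,p,r}(\lambda,k)$ depends on $\lambda$ and $k$ only through $\frac{\lambda}{p-1}-k$. This is precisely the structural feature that makes the finite-depth constant-coefficient recurrence of Proposition~\ref{prop:rek} possible, and I expect it to be the only genuinely non-routine step.
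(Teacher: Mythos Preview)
Your overall strategy is the paper's: invoke Theorem~\ref{thm:1}, expand each power $Y(z)^{-q}$ as $(-c)^q\sum_{k\ge0}\binom{q+k-1}{k}c^kz^{(p-1)k}$, and extract the coefficient of $z^\lambda$ with the Lagrange formula~\eqref{eq:Phim}; the $i=0$ summand produces the $R^{(1)}$-term and the summands $i\ge1$ produce the binomial sum. This is exactly what the paper does (in two sentences).

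Where you diverge is the passage from the offset $Ni$ to the offset $Nr$. You propose a telescoping identity
\[
\binom{\mu(\Gamma)t+Ni}{t}=\sum_{t'=0}^{t}\rho_i(t')\binom{\mu(\Gamma)t'+Nr}{t'},
\]
substitute it, and then interchange the resulting double sum. That detour is unnecessary and is what creates your ``hard part''. Since $N(i-r)$ is a fixed integer, the \emph{quotient}
\[
\binom{\mu(\Gamma)m+Ni}{m}\Big/\binom{\mu(\Gamma)m+Nr}{m}
=\frac{(\mu(\Gamma)m+Nr+1)_{N(i-r)}}{((\mu(\Gamma)-1)m+Nr+1)_{N(i-r)}}
\]
(or its reciprocal when $i<r$) is already a rational function of $m=\tfrac{\lambda-r}{p-1}-k$ and can be absorbed directly into $R^{(2)}$. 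After a harmless shift of the summation index, each monomial $z^sY(z)^{-q}\Phi^i(z)$ contributes
\[
\sum_{k}c^{k}\,\binom{q+k-s_0-1}{k-s_0}\cdot\bigl(\text{rational function of }L-k\bigr)\cdot
\binom{\mu(\Gamma)(L-k)+Nr}{L-k},
\]
and summing finitely many such contributions yields~\eqref{eq:cong2}. No telescoping, no interchange of infinite sums.

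One further remark on the ``moreover'' clause you flag as delicate: the factor $\binom{q+k-s_0-1}{k-s_0}$ is polynomial in $k$, so the honest structure of $R^{(2)}$ is a polynomial in $k$ times a rational function of $L-k$ (equivalently, a polynomial in $L$ times a rational function of $L-k$). This is precisely the shape $A(L-k,L)\,f(L-k)$ assumed in the proof of Proposition~\ref{prop:rek}, and it drops straight out of the coefficient extraction above; you need not work to force $R^{(2)}$ to depend on $L-k$ alone.
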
 

\begin{proof}
We begin as in the proof of Corollary~\ref{cor:2} by quoting
Theorem~\ref{thm:1}, which tells us that the generating function
$\sum_{\lambda=1}^\infty f_\lambda\,z^\lambda$ is given by
\eqref{eq:GF} modulo $p^\alpha$.
However, here we have $Y(z)=z^{p-1}-(N+1)^{-1}$, with
$N=\mu(\Gamma)/(p-1)$. 

Again, we must now extract the coefficient of $z^\lambda$ in \eqref{eq:GF}.
Here, we must first expand fractions,
$$
\frac {1} {Y^q(z)}=\frac {1} {\big(z^{p-1}-(N+1)^{-1}\big)^q}
=(-1)^{q}(N+1)^q\sum_{k=0}^\infty\binom {q+k-1}k(N+1)^kz^{(p-1)k}.
$$
Subsequent coefficient extraction using \eqref{eq:Phim} leads to
the result in \eqref{eq:cong2}, where the term containing
$R^{(1)}_{\Gamma,p,r}(\lambda)$ comes from the summand $b_{0,\alpha}(z)$ 
in \eqref{eq:GF}, while the term containing
$R^{(2)}_{\Gamma,p,r}(\lambda,k)$ is generated by the remaining
summands in \eqref{eq:GF}.
\end{proof}

We now show that the sum on the right-hand side of \eqref{eq:cong2}
satisfies a linear recurrence with constant coefficients, so
that the computation of this sum modulo~$p^\alpha$ can be
achieved in essentially linear time with growing $\lambda$
by reducing results modulo~$p^\alpha$ after each iteration
of the recurrence.
(We say ``essentially" since the computation of the inhomogeneous
part of the recurrence does grow super-linearly.)

\begin{proposition} \label{prop:rek}
Let $r$ be an integer with $0\le r\le p-2$, and
let $S_r(\lambda)$ denote the 
sum on the right-hand side of \eqref{eq:cong2}.
Then we have
\begin{equation} \label{eq:rek}
\sum_{j=0}^{d+1} (-1)^j\binom {d+1}j M^{d+1-j} S_r(\lambda+(p-1)j)
=g_r(\lambda),
\quad \text{for }\lambda\equiv r~\text{\em(mod~$p-1$)},
\end{equation}
where $d$ is the numerator degree of $R^{(2)}_{\Gamma,p,r}(\lambda,k)$ in
$\lambda$, $M=\frac {\mu(\Gamma)} {p-1}+1$,
and $g_r(\lambda)$ is a hypergeometric term, that is,
$g(\lambda+1)/g(\lambda)$ equals a rational function in~$\lambda$.
\end{proposition}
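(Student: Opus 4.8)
The plan is to analyze the sum $S_r(\lambda)$ term by term, separating the two pieces on the right-hand side of \eqref{eq:cong2}. First I would deal with the single term $\left(\frac{\mu(\Gamma)}{p-1}+1\right)^{\lambda/(p-1)}R^{(1)}_{\Gamma,p,r}(\lambda) = M^{\lambda/(p-1)}R^{(1)}_{\Gamma,p,r}(\lambda)$. Applying the operator $\sum_{j=0}^{d+1}(-1)^j\binom{d+1}{j}M^{d+1-j}(\text{shift by }(p-1)j)$ to $M^{\lambda/(p-1)}R^{(1)}_{\Gamma,p,r}(\lambda)$ yields $M^{(\lambda+(p-1)(d+1))/(p-1)}$ times a finite difference of $R^{(1)}$, which is still a hypergeometric term in $\lambda$ (indeed, a rational multiple of $M^{\lambda/(p-1)}$); this contribution can be absorbed into $g_r(\lambda)$. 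So the heart of the matter is the double-indexed sum $T_r(\lambda):=\sum_{k=0}^{\fl{\lambda/(p-1)}}M^k R^{(2)}_{\Gamma,p,r}(\lambda,k)\binom{\frac{\mu(\Gamma)\lambda}{p-1}-\mu(\Gamma)k}{\frac{\lambda-r}{p-1}-k}$.

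The key observation, already flagged in the statement, is that $R^{(2)}_{\Gamma,p,r}(\lambda,k)$ depends only on $\frac{\lambda}{p-1}-k$. Writing $\ell=\frac{\lambda-r}{p-1}$ and reindexing the sum by $i=\ell-k$ (so that the binomial coefficient becomes $\binom{N\lambda-\mu(\Gamma)k}{i}$ where $N=\mu(\Gamma)/(p-1)$, and the rational prefactor becomes a function of $i$ alone, say $\rho(i)$), one sees that $T_r(\lambda)=M^{\ell}\sum_{i=0}^{\ell}M^{-i}\rho(i)\binom{N\lambda-\mu(\Gamma)(\ell-i)}{i}$. Now the summand, as a function of the summation variable, is a hypergeometric term in which the "top" of the binomial coefficient depends on $\lambda$ only through $N\lambda - \mu(\Gamma)\ell = Nr$ plus a fixed shift — in other words, after dividing out the explicit power $M^\ell$, the inner sum is a function of $\lambda$ purely through $\ell=\frac{\lambda-r}{p-1}$, and as such it is of the form $M^\ell$ times a sum whose summand is hypergeometric in both the summation index and $\ell$. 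The plan is then to invoke Zeilberger's algorithm (creative telescoping; cf.\ \cite{PeWZAA}): it produces a linear recurrence with polynomial-in-$\ell$ coefficients satisfied by the inner sum. One then checks that, because the only $\ell$-dependence in the summand that is not already polynomial is carried by $\binom{Nr+\mu(\Gamma)i}{i}$-type quantities which are \emph{independent} of $\ell$ after the reindexing, the certificate in fact has \emph{constant} coefficients and leading coefficient a power of $M$; clearing denominators and translating back from the variable $\ell$ to $\lambda$ (via $\lambda = (p-1)\ell + r$, so a shift $\ell\mapsto\ell+1$ corresponds to $\lambda\mapsto\lambda+(p-1)$) gives exactly the recurrence \eqref{eq:rek}, with the inhomogeneous part $g_r(\lambda)$ arising as the telescoped boundary term, which is a hypergeometric term by construction.

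The main obstacle — and the step requiring the most care — is verifying that Zeilberger's algorithm genuinely returns a recurrence with \emph{constant} coefficients (not merely polynomial ones) and that its order is exactly $d+1$ with the stated binomial form $\sum_{j}(-1)^j\binom{d+1}{j}M^{d+1-j}(\cdots)$. The natural route is to exhibit the operator explicitly: one shows that $(E_\lambda^{p-1} - M)$, where $E_\lambda$ is the shift $\lambda\mapsto\lambda+1$, annihilates the "pure power" part $M^\ell$, so that $(E_\lambda^{p-1}-M)^{d+1}$ annihilates $M^\ell$ times any polynomial of degree $\le d$ in $\ell$; one then argues that applying $(E_\lambda^{p-1}-M)^{d+1}$ to $T_r(\lambda)$ kills the "main term" and leaves only a telescoping boundary contribution, which is the hypergeometric $g_r(\lambda)$. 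Expanding $(E_\lambda^{p-1}-M)^{d+1}$ by the binomial theorem reproduces precisely the left-hand side of \eqref{eq:rek}. I would carry out the remaining routine bookkeeping — that the action on the $k$-sum commutes with the shift appropriately, and that the boundary term really is hypergeometric in $\lambda$ — as a short verification at the end.
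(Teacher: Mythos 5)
Your second half has the right operator in hand --- $(E^{p-1}-M)^{d+1}$, expanded by the binomial theorem, is exactly what the paper applies, and the inhomogeneous term $g_r$ does arise as the boundary contribution from the shifted summation range --- but the way you set up the sum erases the very structure that this operator is designed to exploit. After your reindexing $i=\ell-k$ you conclude that the summand is independent of $\ell$ apart from the factor $M^{\ell-i}$. If that were so, the first-order operator $E^{p-1}-M$ alone would already reduce $T_r$ to a boundary term, the parameter $d$ would play no role, and there would be no ``$M^\ell$ times a polynomial of degree $\le d$ in $\ell$'' left for $(E^{p-1}-M)^{d+1}$ to annihilate; the cancellation you then sketch has nothing to act on. The premise is not tenable: the clause in Corollary~\ref{cor:2} that $R^{(2)}_{\Gamma,p,r}(\lambda,k)$ ``depends only on $\frac{\lambda}{p-1}-k$'' cannot be read as applying to the whole coefficient. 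Only the binomial coefficient and the \emph{denominator} of $R^{(2)}$ depend solely on $\frac{\lambda}{p-1}-k$ (which is why they can be absorbed into a single hypergeometric factor $f(L-k)$); the \emph{numerator}, which inherits polynomial factors in $k$ from the expansion of $Y^{-q}(z)$, genuinely depends on $\lambda$ as well, and $d$ is precisely its degree in $\lambda$ once it is written as a polynomial in $\frac{\lambda}{p-1}-k$ and $\lambda$. Example~\ref{ex:3} makes this concrete: the numerator there contains the term $22661\,k^2(\lambda-k)^2$, which is not a function of $\lambda-k$, has degree $d=2$ in $\lambda$ in the above sense, and forces a recurrence of order exactly $3$, namely $(E-2)^3$.

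The paper's proof keeps this dependence explicit: writing $\lambda=(p-1)L+r$ and the summand as $M^kA(L-k,L)f(L-k)$ with $A$ polynomial of degree $d$ in its second argument, it applies $(E-M)^{d+1}$ in $L$, reindexes $k\mapsto k+j$ inside the $j$-th shifted sum, and observes that the alternating binomial sum over $j$ becomes $\Delta_t^{d+1}A(L-k,t)\big\vert_{t=L}=0$, leaving only the finitely many split-off boundary summands; these are a rational-function multiple of the single hypergeometric term $f(L+1)$, which is why $g_r$ is hypergeometric (a point you defer but must also check, since a sum of hypergeometric terms is not in general hypergeometric). This finite-difference cancellation in the \emph{second} slot of $A$ is the entire content of the proposition and is the step your proposal omits. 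Relatedly, invoking Zeilberger's algorithm and then ``checking'' that it returns constant coefficients of the stated form is not a proof for general $\Gamma$; the explicit computation above is what replaces it.
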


\begin{proof}
We fix $r$, and we write $\lambda=(p-1)L+r$. Using this notation,
the sum $S_r(\lambda)$ has the form
\begin{equation} \label{eq:FormS} 
S_r(\lambda)=S_r((p-1)L+r)=\sum_{k=0}^L M^k A(L-k,L)\,f(L-k),
\end{equation}
where $A(x,y)$ is a polynomial in $x$ and $y$ of degree~$d$ in~$y$,
and $f(L-k)$ comprises the binomial coefficient on the right-hand side
of \eqref{eq:cong2} as well as the denominator of 
$R^{(2)}_{\Gamma,p,r}(\lambda,k)$. We chose to parametrise the
polynomial $A(L-k,L)$ in this slightly unusual form since it
will be of advantage during the following computation.

Let $E$ denote the shift operator in $L$, that is, 
$(E h)(L):=h(L+1)$. We now apply
$(E-M\cdot\text{id})^{d+1}$ to \eqref{eq:FormS}. We obtain
\begin{align*}
(E-M&\cdot\text{id})^{d+1}S_r((p-1)L+r)\\
&=\sum_{j=0}^{d+1}(-1)^{d+1-j}\binom {d+1}jM^{d+1-j}\sum_{k=0}^{L+j} M^k
  A(L+j-k,L+j)\,f(L+j-k)\\ 
&=\sum_{j=0}^{d+1}(-1)^{d+1-j}\binom {d+1}jM^{d+1-j}\sum_{k=-j}^{L} M^{k+j}
  A(L-k,L+j)\,f(L-k)\\ 
&=\sum_{k=0}^{L} M^{d+k+1}f(L-k)
\sum_{j=0}^{d+1}(-1)^{d+1-j}\binom {d+1}j  A(L-k,L+j) + G_r(L),
\end{align*}
where $G_r(L)$ is the hypergeometric term resulting from summands
that we split off when passing from the summation over $k$ running
from $-j$ to $L$ in the next-to-last line to the summation over $k$
running from $0$ to $L$ in the last line.
Using the difference operator $\Delta_t$ defined by $(\Delta_t
h)(t):=h(t+1)-h(t)$, this last expression can be rewritten as
\begin{equation} \label{eq:Diff} 
(E-M\cdot\text{id})^{d+1}S_r((p-1)L+r)
=\sum_{k=0}^{L} M^{d+k+1}f(L-k)\,
\Delta_t^{d+1} A(L-k,t)\big\vert_{t=L} + G_r(L).
\end{equation}
We assumed that $A(x,y)$ is a polynomial of degree $d$ in $y$,
hence $\Delta_t^{d+1}$ kills $A(L-k,t)$. Consequently, on the right-hand side
in \eqref{eq:Diff} there remains only $G_r(L)$, while, after expansion
of $(E-M\cdot\text{id})^{d+1}$, the left-hand side becomes the
left-hand side of \eqref{eq:rek}, as desired.
\end{proof}

\begin{example} \label{ex:3}
We consider the Hecke group $\mathfrak H(7)=C_2*C_7$, whose order
graph is shown in Figure~\ref{fig:3}, and the prime $p=7$.
We have $m_{\mathfrak H(7)}=14$, 
$\zeta_1(\mathfrak H(7))=1$, 
$\zeta_2(\mathfrak H(7))=0$,
$\zeta_7(\mathfrak H(7))=0$, 
and $\zeta_{14}(\mathfrak H(7))=-1$, and thus
$\mu_7(\mathfrak H(7))=0$ and
$\mu(\mathfrak H(7))=6$.

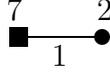
\begin{figure}
\unitlength.4cm
\begin{picture}(15,9)(0,-1)
\put(7,5){\line(1,0){3}}
\put(7,5){\raise0pt\hbox{\kern-4pt\large \vrule width7pt height3.5pt depth3.5pt}}
\put(10,5){\raise0pt\hbox{\kern-3pt\circle*{.5}}}
\put(10,6){\raise-5pt\hbox{\kern-5pt 2}}
\put(7,6){\raise-5pt\hbox{\kern-5pt 7}}
\put(8.5,4.5){\raise-5pt\hbox{\kern-5pt 1}}
\end{picture}
\vskip-1.5cm
\caption{The order graph of the group $\mathfrak H(7)$ in Example~\ref{ex:3}}
\label{fig:3}
\end{figure}
The functional equation for $F(z)=F_{\mathfrak H(7)}(z)$ that we get from
\eqref{Eq:GFTransform} and \eqref{Eq:G(z)Diff} is sufficiently small
to be displayed here:
{\allowdisplaybreaks
\begin{multline*}
537824 z^6 F^{(5)}(z)+230496 z^5 F(z) F^{(4)}(z)+6991712 z^5 
F^{(4)}(z)+41160 z^4 F(z)^2 F^{(3)}(z)\\
+1959216 z^4 F(z) F^{(3)}(z)+24989608 
z^4 F^{(3)}(z)+384160 z^5 \big(F''(z)\big)^2+3920 z^3 F^3(z)
F''(z)\\
+205800 z^3 F^2(z)  
F''(z)+3874528 z^3 F(z) F''(z)+25988424 z^3 F''(z)+41160 z^4 
\big(F'(z)\big)^3 \\
+8820 
z^3 F^2(z) \big(F'(z)\big)^2+288120 z^3 F(z) \big(F'(z)\big)^2
+2512132 z^3 \big(F'(z)\big)^2\\
+210 z^2 
F^4(z) F'(z)+9800 z^2 F^3(z) F'(z)+180516 z^2 F^2(z) F'(z)+1561336 z^2 F(z) 
F'(z)\\
+5336394 z^2 F'(z)+576240 z^5 F^{(3)}(z) F'(z)+164640 z^4 F(z) F'(z) 
F''(z)\\
+3649520 z^4 F'(z) F''(z)+z F^6(z) +42 z F^5(z) +679 z F^4(z) +5292 z 
F^3(z) \\
+20335 z F^2(z) +34986 z F(z)-F(z)+19305 z=0.
\end{multline*}}%
The algorithm in the proof of Theorem~\ref{thm:1} to find a
solution to this congruence gives
{\allowdisplaybreaks
\small
\begin{multline} \label{eq:H7GF}
\frac{\begin{matrix}7 \left(8 z^{18}+12 
z^{17}+7 z^{15}+7 z^{13}+48 z^{12}+16 z^{11}
\kern5.5cm
\right.\\\left.
\kern4cm
+7 z^{10}+35 z^9+42 z^7+23 
z^6+24 z^5+21 z^4+42 z^3+14 z\right)\end{matrix}}{\left(1-2 z^6\right)^3}\\
+\left(1+
\frac{\begin{matrix}7 \left(35 z^{18}+11 z^{17}+30 z^{16}+14 z^{14}+14 
z^{12}+17 z^{11}
\kern3cm
\right.\\\left.
\kern3cm
+47 z^{10}+14 z^9+42 z^8+21 z^6+z^5+32 z^4+42 
z^3\right)\end{matrix}}{\left(1-2 z^6\right)^3}\right) \Phi(z)\\
+\frac{\begin{matrix}7 \left(28 z^{17}+36 z^{16}+31 z^{15}+14 z^{14}+7 
z^{13}+21 z^{11}+20 z^{10}
\kern3cm
\right.\\\left.
\kern3cm
+18 z^9+35 z^8+28 z^7+7 z^5+30 z^4+41 z^3+28 
z^2+21 z\right)\end{matrix} }{\left(1-2 z^6\right)^3}\Phi^2(z)\\
+\frac{\begin{matrix}7 
\left(42 z^{16}+29 z^{15}+22 z^{14}+7 z^{13}+7 z^{12}+7 z^{10}+27 z^9
\kern3cm
\right.\\\left.
\kern3.5cm
+48 
z^8+35 z^7+21 z^6+35 z^4+16 z^3+2 z^2+42 z\right)\end{matrix} }{\left(1-2 
z^6\right)^3}\Phi^3(z)\\
+\frac{\begin{matrix}7 \left(42 z^{17}+14 
z^{15}+41 z^{14}+19 z^{13}+7 z^{12}+42 z^{11}
\kern4cm
\right.\\\left.
\kern3cm
+35 z^9+z^8+23 z^7+21 z^6+42 
z^5+28 z^3+26 z^2+31 z\right)\end{matrix} }{\left(1-2 z^6\right)^3}\Phi^4(z)\\
+\frac{\begin{matrix}7 \left(22 z^{18}+21 z^{17}+21 z^{16}+21 z^{14}+34 z^{12}+7 z^{11}
\kern3cm
\right.\\\left.
\kern4cm
+7 
z^{10}+28 z^8+9 z^6+28 z^5+28 z^4+42 z^2\right) \end{matrix}}{\left(1-2 
z^6\right)^3}\Phi^5(z)\\
\quad \quad \text{modulo }343.
\end{multline}}%
Finally, we have to extract coefficients. We content ourselves with
displaying here the results for $f_\lambda(\mathfrak H(7))$ for
$\lambda\equiv0$~(mod~6); for the other congruence classes
for~$\lambda$, similar results are available. By comparing
coefficients of $z^{6\lambda}$ on both sides of \eqref{eq:H7GF}, we obtain
\begin{multline} \label{eq:Res3}
f_{6\lambda}(\mathfrak H(7))\equiv
7\cdot2^{\lambda-2 } (  49 \lambda^2- 7 \lambda +4)\\
+
7\sum_{k=0}^\infty
(-1)^{k+\lambda} 2^{k-4} 
\frac{5\, (5 k-5 \lambda+1)\, (k-\lambda) S(\lambda) }
{3 \,(6\lambda-6k-5)_5}
\binom {6\lambda-6k}{\lambda-k}
\pmod{343},
\end{multline}
where
\begin{multline*}
S(\lambda)=22661 k^4-45322 k^3 
\lambda+70594 k^3+22661 k^2 \lambda^2-110545 k^2 \lambda+92331 k^2+39951 k \lambda^2
\\
-110913 k 
\lambda+56014 k+28424 \lambda^2-38696 \lambda+12528.
\end{multline*}
Let us denote the sum on the
right-hand side of the congruence \eqref{eq:Res3} by $S(\lambda)$.
Applying Proposition~\ref{prop:rek} (or, more precisely, its proof;
alternatively, one may use the Gosper--Zeilberger algorithm;
cf.\ \cite{PeWZAA}), 
we see that $S(\lambda)$ satisfies the recurrence
\begin{multline*}
S( \lambda+3)
- 6 S( \lambda+2)
+ 12 S( \lambda+1)
-8 S( \lambda)
\\
= 84\, (-1)^
       {l+1}\,T(\lambda)\frac {(l+1) (2 l-1) (3 l-2) (3 l-1) (6 l-5) ( 
         6 l-1) \, (6 l)! \,(6 l-6)!}
    {(5 l+11) ( 
         5 l+12) (5 l+13) \,(l+1 )! \,(5 l+10)!\, (6 l-1)!},
\end{multline*}
where
\begin{multline*}
T(\lambda)
=7578375074183 l^{12}
+ 110764942152696 l^{11} 
+ 719438896272607 l^{10} \\
+ 2739679993093800 l^9 
+ 6794561274739329 l^8 
+ 11525824255968648 l^7 \\
+ 13662933657289381 l^6 
+ 11354903297697240 l^5 
+ 6532000464773588 l^4 \\
+ 2520106018198656 l^3 
+   613697061412512 l^2 
+ 83672481893760 l 
+4738762828800.
\end{multline*}

\end{example}

\begin{remark} \label{rem:2}
The discussion of free subgroup numbers of lifts $\Gamma_m(3)$ 
of the classical
modular group $\mathfrak{H}(3)\cong \PSL_2(\Z)$ in \cite[Sec.~16]{KM}
taken modulo powers of~$3$
fits into the framework of Corollary~\ref{cor:2}. 
Indeed, for these lifts, we have 
$\mu_{\Gamma_m(3)}=2$, which is not congruent to $0,1$~(mod~$3$).
Consequently, according to Theorem~\ref{thm:1}, we must be prepared
to encounter denominators in the coefficients of the polynomial
in $\Phi(z)$ that expresses the generating function for the free
subgroup numbers when coefficients are reduced modulo a power of~$3$.
This is exactly what happened in \cite{KM}, and this is also the
reason why coefficient extraction was considerably harder in
\cite{KM} than in \cite{KKM}.
\end{remark}

\end{document}